\documentclass[12pt]{amsart}
\usepackage{amsmath}%
\usepackage{amsfonts}%
\usepackage{rotating}
\usepackage[x11names]{xcolor}
\usepackage{xspace}

\usepackage[mathscr]{eucal}


\newtheorem{theorem}{Theorem}[section]
\theoremstyle{plain}

\newtheorem{corollary}[theorem]{Corollary}

\newtheorem{definition}[theorem]{Definition}

\newtheorem{lemma}[theorem]{Lemma}
\newtheorem{proposition}[theorem]{Proposition}


\def \endit{\quad\vrule height 1.9ex width 1.174ex depth .2ex \par} 
\renewenvironment {proof}{{\par\bf Proof:{\enspace}}}{\endit\smallskip}
\everymath{\displaystyle}
\DeclareMathOperator{\Spann}{span}

\begin{document}
\title[Hopf algebra isomorphisms]{Isomorphisms and automorphisms of discrete multiplier Hopf C*-algebras}

\author{Dan Z. Ku\v{c}erovsk\'{y}}
\address
{UNB--F \newline%
\indent Canada\qquad E3B 5A3}%
\email{dkucerov@unb.ca}%
\thanks{We thank NSERC for financial support. Authour's e-mail address: \texttt{dkucerov@unb.ca}. Phone: ++1-506-458-7364}
\subjclass{Primary 47L80, 16T05; Secondary 47L50, 16T20 } %
\keywords{Hopf algebras, C*-algebras}%
\dedicatory{}
\begin{abstract} We construct Hopf algebra isomorphisms of discrete multiplier Hopf C*-algebras, and Hopf AF C*-algebras (generalized quantum UHF algebras), from $K$-theoretical data.  Some of the intermediate results are of independent interest, such as a result that Jordan maps of Hopf algebras intertwine antipodes, and the applications to automorphisms of Hopf algebras.
 \end{abstract}\maketitle
\newcommand{\Mult}{{\mathcal M}} 
\newcommand{\Span}{\displaystyle \Spann}
\newcommand{\compose}{\circ}
\newcommand{\Hgns}{\ensuremath{{\mathcal H}_{\text{\tiny GNS}}}} 
\renewcommand{\H}{\ensuremath{{\mathcal H}}} 
\newcommand{\C}{\mathbb C}
\newcommand{\N}{\mathbb N}
\newcommand{\Z}{\mathbb Z}
\newcommand{\R}{\mathbb R}
\newcommand{\quaternions}{\mathbb H}
\newcommand{\M}[1]{M_{#1}(\C)} 
\newcommand{\Tr}{\mbox{\rm Tr}}
\newcommand{\BH}{\ensuremath{B(\H)}} 
\newcommand{\tensor}{\otimes} 
\newcommand{\BHH}{\ensuremath{B(\H\tensor\H)}} 
\newcommand{\entrelacement}{\/\textit{entrelacement\xspace}} 
\newcommand{\isom}{\cong}
\newcommand{\Sh}{\ensuremath{\widehat{S}} } 
\newcommand{\Ah}{\ensuremath{\widehat{A}} } 
\newcommand{\Bh}{\ensuremath{\widehat{B}} } 
\newcommand{\AhA}{\ensuremath{\Ah\tensor A}}
\newcommand{\BhB}{\ensuremath{\Bh\tensor B}}
\newcommand{\uh}{\ensuremath{\widehat{u}}} \newcommand{\Uh}{\ensuremath{\widehat{U}}} 
\newcommand{\fancyK}{\ensuremath{\widetilde{K}}} 
\newcommand{\antipode}{\ensuremath{\kappa}} 
\newcommand{\antipodeh}{\ensuremath{\widehat\kappa}} 
\newcommand{\ksymmetric}{$\antipode$-symmetric\xspace}
\newcommand{\kpositive}{$\antipode$-positive\xspace}
\newcommand{\kpositivity}{$\antipode$-positivity\xspace}
\newcommand{\counit}{\epsilon} 
\newcommand{\counith}{\widehat{\epsilon}}
\newcommand{\F}{\mbox{\ensuremath{\mathcal F}}} 
\newcommand{\Fh}{\mbox{\ensuremath{\widehat{\mathcal F}}}} 
\newcommand{\inv}{\ensuremath{{}^{-1}}}
\newcommand{\Id}{\mbox{\rm Id}}
\newcommand{\Ad}{\mbox{\rm Ad}}  
\newcommand{\Fix}{\mbox{\rm Fix}}  
\newcommand{\CoZ}{\mbox{\rm CoZ}} 
\newcommand{\CoZr}{\CoZ_{R}} 
\newcommand{\BiU}{\mbox{\rm Bi\hspace{1pt}\ensuremath{\mathcal{U}}}} 
\newcommand{\cofixe}{\/\textit{cofix\'e }} 
\newcommand{\fixe}{\/\textit{fix\'{e} }}
\newcommand{\Inn}{\mbox{\rm Inn$_H$}}  
\newcommand{\Ug}{{\mathcal U}}
\newcommand{\arrow}{\longrightarrow}
\renewcommand{\dim}{{\mbox{\rm Dim}_\C}}
\newcommand{\ps}{pseudosimple\xspace}
\newcommand{\pscity}{pseudosimplicity\xspace}
\newcommand{\commutes}{\copyright} 
\newcommand{\product}[1][\empty]{\ensuremath{\pmb{\boldsymbol{\cdot\!}}_#1}}
\newcommand{\coproduct}[1][\empty]{\ensuremath{\delta_#1}}
\newcommand{\norm}[1]{\left\|#1\right\|}
\newcommand{\lfnorm}[1]{\norm{#1}_{\scriptscriptstyle{lf}}}
\newcommand{\tracenorm}[1]{\norm{#1}_{\scriptscriptstyle{1}}}
\newcommand{\Ksymbol}{\ensuremath{\fancyK\mbox{-symbol}}\xspace}
\newcommand{\Ksymbols}{{\Ksymbol}\mbox{s}\xspace}
\newcommand{\convolution}{\diamond} \newcommand{\convolute}{\convolution}  \newcommand{\convolve}{\convolution} 
\newcommand{\boxproduct}{\,\square\,} 
\newcommand{\comment}[1]{\-\marginpar[\raggedleft\footnotesize\it\textcolor{Sienna4}{#1\smallskip}]{\raggedright\footnotesize\it\textcolor{Sienna4}{#1\smallskip}}}
\renewcommand{\comment}[1]{}
\newcommand{\compact}{\ensuremath{\mathcal K}}
\newcommand{\ip}[2]{\ensuremath{\left\langle #1,#2\right\rangle}}
\newcommand{\Cu}{\ensuremath{{\mathcal C}\hspace{-.75pt}u}}
\renewcommand{\fancyK}{\ensuremath{{K}}} 
\newcommand{\ConvolutionAlgebra}{\ensuremath{{\mathscr C}}}
\newcommand{\ConvolutionAlgebraProduct}{\ensuremath{{\ast}}}
\renewcommand{\S}{\ensuremath{\mathscr S}} 
\definecolor{refkey}{cmyk}{0.1,0.1,1,0}
\definecolor{labelkey}{cmyk}{0,0.2,1,0.1}
\hyphenation{mult-i-pli-cat-ive  co-mult-i-pli-cat-ive anti-mult-ipli-ca-tive co-anti-mult-ipli-ca-tive iso-mor-phism co-anti-iso-mor-phism anti-auto-mor-phism anti-auto-mor-phism auto-mor-phi-sms anti-auto-mor-phism }
\section{Introduction}
\marginpar{ 
	}

Classification is a recurring theme in mathematics, and probably the most successful approach to classifying  C*-algebras has been to use $K$-theory, often  augumented by some additional information, as a classifying functor\cite{classification.overview}. 
We consider the case of C*-algebras with Hopf algebra structure, and 
we find that in many cases there is a product structure on the $K$-theory group. We then address the problem of constructing Hopf algebra maps from algebra maps respecting the product structure on the $K$-theory group, with applications to constructing automorphisms and isomorphisms of Hopf algebras. Theorems \ref{th:banacheweski.Hopf},   \ref{th:banacheweski.fd.Hopf2} and \ref{th:banacheweski.Hopf2} allow constructing Hopf algebra (co-anti) automorphisms or isomorphisms from purely K-theoretical data. These results are used to study bi-inner Hopf *-automorphisms, which are the Hopf *-automorphisms of a Hopf C*-algebra that are inner as algebra automorphisms, both in the dual algebra and in the given algebra. We extend the results of \cite{kucerovsky.BiU}.  We also develop techniques of independent interest, involving Jordan bi-algebra maps, linear positivity preserving maps, and other related concepts. In the last two sections, we give an example where we apply the techniques to the case of a Hopf AF C*-algebra, which includes an interesting special case that we call the quantum UHF case. With a compact Hopf AF C*-algebra can be associated two K-theory rings, one on the discrete dual and one on the algebra itself. We obtain isomorphism results for both cases (Theorem \ref{th:iso.for.discrete.AF} and Theorem \ref{th:iso.for.compact.AF}, respectively).
 
A Hopf algebra is a bi-algebra with an antipode map $\antipode.$ A multiplier Hopf algebra is a generalization where the co-product homomorphism takes values in a multiplier algebra.
Consider a compact  Hopf algebra $A$ that is also a C*-algebra; in the framework of \cite{BS}, the dual object is a C*-algebra $B,$ and has a co-product $\coproduct{}\colon B\arrow \Mult(B\tensor B).$  This dual object will be  a possibly infinite direct sum of matrix blocks at the level of C*-algebras, and the co-product homomorphism takes values in a direct product of matrix blocks. 
Our notation is based on that of \cite{BS}, denoting co-products by $\coproduct{},$ antipode by $\antipode,$ pairing by $\beta(\cdot\,,\cdot),$ and  co-unit by $\counit.$ The real algebra of \ksymmetric elements is the algebra of elements that are fixed under the antipode composed with the C*-algebraic involution.  The co-centre is the sub-algebra of co-commutative elements. We denote the flip, on a tensor product, by $\sigma.$

\section{K-theory of discrete multiplier Hopf C*-algebras}

We begin with the module picture of (algebraic) $K$-theory.

\begin{proposition} Let $M_1$ and $M_2$ be two projective sub-modules of a discrete Hopf C*-algebra $A.$ Taking the tensor product of these modules and using the co-product homomorphism to  restrict rings gives a projective module over $A.$   \end{proposition}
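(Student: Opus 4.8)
The plan is to reduce, by additivity, to the free rank-one module $A$ itself and then to invoke the basic bijectivity of the ``Galois map'' that is built into the (multiplier) Hopf structure. First I would make the construction precise. Since $A$ is a direct sum of matrix blocks, $A$, and hence $A\tensor A$, has local units, so every left ideal of $A\tensor A$ is an essential $A\tensor A$-module and therefore carries a canonical action of $\Mult(A\tensor A)$; in particular $M_1\tensor M_2$, being such a left ideal when $M_1$ and $M_2$ are submodules of $A$, is such a module. Restricting scalars along the coproduct $\coproduct{}\colon A\arrow\Mult(A\tensor A)$ then turns $M_1\tensor M_2$ into a left $A$-module, which I will denote $M_1\tensor_{\coproduct{}}M_2$. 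Only essentiality of the underlying $A\tensor A$-module enters, so the construction makes sense for arbitrary projective $A$-modules, not merely submodules of $A$, and regardless of finite generation.

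Next I would reduce to the free case. Choose $N_i$ with $M_i\oplus N_i\isom A^{(I_i)}$ for $i=1,2$. Tensoring over $\C$, and noting that $\tensor_{\C}$ of a direct-sum decomposition of $A$-modules yields a direct-sum decomposition of $A\tensor A$-modules (hence, after restriction of scalars, of $A$-modules via $\coproduct{}$), one sees that $M_1\tensor_{\coproduct{}}M_2$ is a direct summand of $\bigl(A\tensor_{\coproduct{}}A\bigr)^{(I_1\times I_2)}$. Since a direct sum of projective modules is projective, and so is any of its direct summands, it suffices to prove that $A\tensor_{\coproduct{}}A$ is a projective --- indeed free --- left $A$-module.

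This is the heart of the matter. Let $T\colon A\tensor A\arrow A\tensor A$ be the Galois map $T(a\tensor b)=\coproduct{}(a)(1\tensor b)$; its bijectivity is one of the defining axioms of a multiplier Hopf algebra (the inverse being given, through the antipode, by $x\tensor y\mapsto\sum x_{(1)}\tensor\antipode(x_{(2)})y$). Because $\coproduct{}$ is multiplicative, $T(ca\tensor b)=\coproduct{}(ca)(1\tensor b)=\coproduct{}(c)\,T(a\tensor b)$, so $T$ is an isomorphism of left $A$-modules from $A\tensor A$ with the action $c\cdot(a\tensor b)=ca\tensor b$ on the first leg onto $A\tensor A$ with the $\coproduct{}$-action. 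The source is simply $A\tensor_{\C}V$, where $V$ is the underlying vector space of $A$ and $A$ acts on the left tensor factor --- that is, a free $A$-module on any basis of $V$. Hence $A\tensor_{\coproduct{}}A$ is free and the proposition follows. (The right-module case is analogous, using the companion Galois map $a\tensor b\mapsto(a\tensor 1)\coproduct{}(b)$.)

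I expect the main obstacle to be not the algebra --- the Galois bijectivity is standard --- but the bookkeeping that makes ``restrict rings'' along a homomorphism into a multiplier algebra legitimate: at each stage (for $M_1\tensor M_2$, for $A\tensor A$, and for arbitrary direct sums of copies of $A\tensor A$) one must check that the $A\tensor A$-module in question is essential, so that $\Mult(A\tensor A)$, and thus $A$ via $\coproduct{}$, genuinely acts, and that the decompositions used are decompositions of $A\tensor A$-modules and not merely of vector spaces. I would also record, for later use, that the operation need not preserve finite generation --- when $A$ is infinite-dimensional, $A\tensor_{\coproduct{}}A$ is free of infinite rank --- although finiteness of the fusion rules of $A$ would restore it for finitely generated $M_1$ and $M_2$.
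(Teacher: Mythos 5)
Your proof is correct in substance but takes a genuinely different route from the paper's. The paper's argument is a dimension count: a projective sub-module of a discrete Hopf C*-algebra, i.e.\ of a $c_0$-direct sum of matrix blocks, is finite-dimensional over $\C$; hence $M_1\tensor M_2$ is finite-dimensional, restriction of rings along $\coproduct{}$ cannot increase dimension, and a finite-dimensional module over a direct sum of matrix algebras is supported on finitely many blocks and is therefore, by semisimplicity, projective. You instead reduce by additivity to the rank-one free module and invoke the bijectivity of the Galois map $T(a\tensor b)=\coproduct{}(a)(1\tensor b)$ to exhibit $A\tensor_{\coproduct{}}A$ as a free left $A$-module. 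Your route is more conceptual and more general --- it needs neither finite-dimensionality of the $M_i$ nor the matrix-block structure, only the multiplier-Hopf axioms, so it would work verbatim for non-discrete multiplier Hopf algebras --- at the cost of exactly the non-unital bookkeeping you flag: essentiality of the modules, the meaning of ``free'' over a ring with only local units, and the fact that bijectivity of $T$ holds on the algebraic core rather than on the C*-completion. The paper's route is shorter and, importantly for the sequel, delivers for free the quantitative fact that the product module is again finite-dimensional and supported on finitely many blocks, which is what makes the induced product on $K_0$ land in classes of finitely generated projectives; in your version this must be added back by observing (as the paper does) that projective sub-modules of $A$ are automatically finite-dimensional, rather than by appealing to finiteness of fusion rules.
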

\begin{proof}
The module $M_i$ is a sub-module of a $c_0$-direct sum of matrix algebras. Such modules are necessarily finite-dimensional (over $\C$), and so the tensor product $M_1\tensor M_2$ of two such projective sub-modules is again finite-dimensional. 
The tensor product $M_1\tensor M_2$ is in a natural way a module over the multiplier algebra $\Mult(A\tensor A),$ and thus we may restrict by the co-product homomorphism. The restriction of rings operation will not increase the dimension, and therefore we obtain a sub-module of $A$ with finite complex dimension.  Such a sub-module is supported on finitely many matrix blocks of $A,$ and thus is projective.
\end{proof}
Since the restriction of rings operation is functorial, we obtain not just a product operation on the projective sub-modules of $A,$ but also a  product operation on the $K_0$-group in algebraic $K$-theory. In the case of $c_0$-direct sums of matrix algebras, the algebraic $K_0$-group is isomorphic to the Banach algebra $K_0$-group that is usually used with C*-algebras. 
 Therefore, we may further assume that we are working with Hilbert modules rather than just algebraic modules. 

There exists also an operator picture of the $K$-theory group, in terms of projections in $M_n(A).$ 
In $K$-theory it is necessary that direct sums of generators may be formed, and this is done by taking direct sums within a matrix algebra.  In order to accomodate different sizes of matrix algebras in a unified way, we may as well regard $M_n$ as a subalgebra of the compact operators $\compact.$ 
Viewing the product in terms of operators in $A\tensor\compact$ rather than modules, it satisfies the following properties:
\begin{definition} We denote by $\boxproduct$ a binary operation on  elements of $ A\tensor \compact$ with the following properties:
\begin{enumerate}\item (Pullback) $a(b\boxproduct c)=\sum (ba_1)\boxproduct(ca_2),$ where $\coproduct{}(a)=\sum a_1\tensor a_2,$ and
	\item (normalization) $(\tau\tensor t)(b\boxproduct c)=(\tau\tensor t)(b)(\tau\tensor t)(c),$ where $\tau$ denotes the  Haar state of $A$ and $t$ denotes the standard trace on $\compact$.
\end{enumerate} 
\end{definition}
 We remark that the above pullback property comes from the fact that, when we perform the restriction of rings operation, the action of $a$ in $A$ on the restricted module is mapped to the action of $\coproduct{}(a)$ on a tensor product of $A$-modules.  

We recall that the dual object of a discrete multiplier Hopf C*-algebra is, in the setting provided by \cite{BS,BBS}, a compact Hopf C*-algebra. There is a Fourier transform, see \cite[pg. 394]{PW1990} and \cite[para. 1.3]{BBS}, that can be defined as $\F(b):=[\Id\tensor\tau(b\cdot)] V^* ,$ where $V$ is a multiplicative unitary, and $\tau$ is the (extension of the) Haar weight.  The Fourier transform can also be defined via the pairing with the dual (following  Van Daele\cite{vanDaele1994}) by  
$$\beta(a,\F(b))=\tau(ba),$$ where  the elements $a$ and $b$ belong to a Hopf C*-algebra $A,$ $\tau$ is the Haar weight,  and $\beta(\cdot\,,\cdot)$ is the pairing with the dual algebra. In most cases, we will assume that the  Haar weights are tracial (we will see that this allows us to bring real C*-algebraic $K$-theory into the picture). 
An operator-valued convolution product, $\convolve,$ can be defined by the  property $\F(a\convolve b)=\F(a)\F(b),$ where $a$ and $b$ are elements of a Hopf C*-algebra $A,$ and $\F$ is the Fourier transform defined previously. 

 The next Proposition relates  $\boxproduct$ and $\convolve.$  

\begin{proposition} Let $A$ be a discrete Hopf C*-algebra with tracial Haar weight. Let $\ell_1$ and $\ell_2$ be in $A^+.$
If $\sigma$ is a trace in $T(A),$  and $t$ is the standard trace on $\compact,$ then
 $(\sigma\tensor t)(\ell_1\boxproduct \ell_2)=\sigma(\ell_1 \convolve \ell_2).$  \label{prop:box.and.convolve}
\end{proposition}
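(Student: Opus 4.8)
The plan is to realize the trace $\sigma$ in the form $\tau(h\,\cdot\,)$ for a positive central multiplier $h$, and then to evaluate each side of the asserted identity down to one and the same expression built from the pairing $\beta(\cdot\,,\cdot)$ and the coproduct of $h$. Since $A$ is a $c_0$-direct sum of full matrix blocks, and since both $\sigma$ and the tracial Haar weight $\tau$ restrict to every block as a positive multiple of the (unnormalized) block trace, there is a positive element $h$ of the centre of $\Mult(A)$, acting as a scalar on each block, with $\sigma(\cdot)=\tau(h\,\cdot\,)$. Because $\ell_1$ and $\ell_2$ are positive elements of $A$ they are --- up to an approximation that the continuity of the maps involved absorbs --- supported on finitely many blocks, so $(\ell_1\tensor\ell_2)\,\coproduct{}(h)$ in fact lies in $A\tensor A$; equivalently, one may cut $h$ down to a genuine element of $A$. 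Writing $\coproduct{}(h)=\sum h_1\tensor h_2$ in the usual (covering) multiplier-Hopf-algebra sense, all the sums below are then absolutely convergent. We shall also use the defining property of the dual pairing that the product of the dual object turns into the coproduct of $A$, namely $\beta(a,yz)=\sum\beta(a_1,y)\,\beta(a_2,z)$.

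For the left-hand side, embed $A$ into $A\tensor\compact$ by a rank-one projection $p$ with $t(p)=1$. Since $\sigma=\tau(h\,\cdot\,)$ we have $(\sigma\tensor t)(\ell_1\boxproduct\ell_2)=(\tau\tensor t)\bigl(h\,(\ell_1\boxproduct\ell_2)\bigr)$, and the pullback property of $\boxproduct$ rewrites the argument as $\sum(\ell_1 h_1)\boxproduct(\ell_2 h_2)$. Applying the normalization property to each summand and using $t(p)=1$ gives
\[
(\sigma\tensor t)(\ell_1\boxproduct\ell_2)=\sum\tau(\ell_1 h_1)\,\tau(\ell_2 h_2).
\]

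For the right-hand side, combine the defining property $\F(\ell_1\convolve\ell_2)=\F(\ell_1)\F(\ell_2)$ of the operator-valued convolution with the identity $\beta(a,\F(b))=\tau(ba)$ defining the Fourier transform and with traciality of $\tau$:
\begin{align*}
\sigma(\ell_1\convolve\ell_2)
&=\tau\bigl(h\,(\ell_1\convolve\ell_2)\bigr)
=\beta\bigl(h,\F(\ell_1)\F(\ell_2)\bigr)\\
&=\sum\beta\bigl(h_1,\F(\ell_1)\bigr)\,\beta\bigl(h_2,\F(\ell_2)\bigr)
=\sum\tau(\ell_1 h_1)\,\tau(\ell_2 h_2).
\end{align*}
Comparing the two displays proves the proposition.

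The step I expect to be the main obstacle is the one that is quick to state and fussy to justify: legitimizing the manipulations with $\coproduct{}(h)$ when $h$ is a priori only a multiplier. One has to make sense of the Sweedler expansion $\sum h_1\tensor h_2$, extend the pullback property of $\boxproduct$ from $A$ to $\Mult(A)$ acting on the first leg of $A\tensor\compact$, and check that the pairing identity $\beta(a,yz)=\sum\beta(a_1,y)\,\beta(a_2,z)$ and the two axioms for $\boxproduct$ hold verbatim on the finite-dimensional support actually in play. I would dispose of all of this by the cutting-down indicated above --- reducing to $h\in A$ before any coproduct is formed --- after which the remaining work is routine bookkeeping with absolutely convergent sums that the positivity of $\ell_1$ and $\ell_2$ keeps under control.
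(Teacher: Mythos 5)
Your proof is correct and follows essentially the same route as the paper: both sides are reduced, via the pullback/normalization axioms for $\boxproduct$ on one hand and the Fourier-transform/pairing duality $\beta(z,\F(\ell_1)\F(\ell_2))=\sum\beta(z_1,\F(\ell_1))\beta(z_2,\F(\ell_2))$ on the other, to the common expression $\sum\tau(\ell_1 h_1)\tau(\ell_2 h_2)$ for $\sigma=\tau(h\,\cdot\,)$ with $h$ central. The only difference is organizational: the paper reduces to $h$ central and finitely supported and then handles general unbounded traces by lower semicontinuity, whereas you cut down $h$ via the (approximate) finite support of $\ell_1,\ell_2$ --- the same reduction in substance.
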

\begin{proof} Let us denote by $\tau$ the tracial Haar state on $A.$ Note that $\tau(\ell_1 \convolve \ell_2)=\tau(\ell_1)\tau(\ell_2),$ and also $(\tau\tensor t) (\ell_1\boxproduct \ell_2)=\tau(\ell_1)\tau(\ell_2),$ so that $(\tau\tensor t)(\ell_1\boxproduct \ell_2)=\tau(\ell_1 \convolve \ell_2).$ Thus the desired identity holds in the case $\sigma=\tau.$  

 Let us next consider the case where $\sigma$ is of the form $\tau(z\cdot)$ with $z$ being central and supported in only finitely many matrix blocks. If $\coproduct{}(z)=a_{(1)}\tensor a_{(2)},$
we have 
\begin{align*}(\sigma\tensor t) (\ell_1\boxproduct \ell_2)&=(\tau\tensor t)((a_{(1)}\ell_1)\boxproduct (a_{(2)}\ell_2))\\
&=\tau (a_{(1)}\ell_1) \tau(a_{(2)}\ell_2)\\
&=\tau((a_{(1)}\ell_1)\tensor (a_{(2)}\ell_2)).\end{align*}
It follows that   $(\sigma\tensor t)(\ell_1\boxproduct \ell_2)$ is equal to $(\tau\tensor\tau)(\coproduct{}(z)(\ell_1\tensor\ell_2)).$ Using the definition of the Fourier transform to rewrite this in terms of the pairing $\beta,$ we obtain $\beta(\coproduct{}(z),\F(\ell_1)\tensor\F(\ell_2)),$ which is equal to $\beta(z,\F(\ell_1)\F(\ell_2)).$ 

 From the definition of the Fourier transform, $$\beta(z,\F(\ell_1)\F(\ell_2))=\tau(z\F\inv(\F(\ell_1)\F(\ell_2))).$$ But $\F\inv(\F(\ell_1)\F(\ell_2))$ is the convolution product of $\ell_1$ and $\ell_2,$ so we have proven the desired identity in the case  where $\sigma$ is of the form $\tau(z\cdot)$ with $z$ being central and supported in only finitely many matrix blocks. 

In the general case where $\sigma$ is an unbounded trace on $A,$ we use the fact that traces on a $c_0$ direct sum of matrix algebras are lower semicontinuous to write the given $\sigma$ as a limit of bounded and compactly supported traces $\sigma_n.$  For these traces, $(\sigma_n\tensor t) (\ell_1\boxproduct \ell_2)=\sigma_n(\ell_1 \convolve \ell_2),$ from which the general case follows. 
\end{proof}

It is natural to consider maps from one discrete Hopf C*-algebra to another that intertwine the product $\boxproduct.$ We state the definition in a dualized form, using the fact that the states on the $K$-theory group $K(A)$ correspond to the traces on the algebra $A$. In fact we only need this property to hold in the case that $p$ and $q$ are generators of $\fancyK$-theory, which in our case are minimal projections of the algebra.
\begin{definition}A map $f\colon \fancyK(A)\arrow \fancyK(B)$ is said to be \emph{$K$-co-multipli\-cative} if  $  (\sigma\tensor\tau)(f(p)\boxproduct f(q))=(\sigma\tensor\tau)((f\tensor\Id)(p\boxproduct q)),$ where $p$ and $q$ are 
generators of $\fancyK$-theory,    the linear functional $\sigma$ is a trace on $B,$ and the linear functional $\tau$ is the standard trace on $\compact.$ \end{definition} 
We shall show that  $\fancyK$-co-multipli\-ca\-tive maps induce maps on the dual that are well-behaved with respect to co-traces. \comment{An unbounded example of a co-tracial linear functional is the trace on the dual.
I think that this trace is in some cases the co-unit composed with the inverse Fourier transform, but since we may not have co-unit projs we have to take limits of co-tracial functionals of the form $\tau(e_n\cdot)$ where $(e_n)$ is an approximate unit for the co-commutative subalgebra.}
We say that a \textit{co-tracial linear functional} is a linear functional $g$ with the property $g(a\convolve b)=g(b\convolve a).$ Since the Haar state satisfies $\tau(a\convolve b)=\tau(a)\tau(b),$ it is an example of a co-tracial linear functional.

\begin{lemma} Let $A, B$ be Hopf C*-algebras with  tracial Haar states. Let $f:A\arrow B$ be a C*-algebraic  isomorphism that intertwines the Haar states. Then, we have the identity
  $$\F_B \compose f = f^*\inv\compose\F_A,$$ where $\F$ denotes the Fourier transform, and $f^*\inv,$ which is continuous, is the inverse map for $f^*,$ the map induced by $f$ on the dual algebras.  
\label{lem:pullbacks.and.Fourier}\end{lemma}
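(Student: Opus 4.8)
The plan is to transport the asserted identity to the dual algebra and verify it there by a short computation with the pairing, using non-degeneracy to conclude. Recall from the discussion preceding the lemma that the Fourier transform is characterized by $\beta_A(a,\F_A(b))=\tau_A(ba)$ for all $a,b\in A$, and likewise $\beta_B(c,\F_B(d))=\tau_B(dc)$ for $c,d\in B$, where $\tau_A,\tau_B$ denote the tracial Haar states. The map $f^*\colon\Bh\arrow\Ah$ is the transpose of $f$ with respect to these pairings, that is $\beta_A(a,f^*(\omega))=\beta_B(f(a),\omega)$ for $a\in A$ and $\omega\in\Bh$; it is injective because $f$ is surjective and $\beta_B$ is non-degenerate, so $f^*\inv$ makes sense at least on the range of $f^*$.

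The heart of the matter is the identity $f^*(\F_B(f(b)))=\F_A(b)$ for every $b\in A$. Fixing $b$ and pairing $f^*(\F_B(f(b)))$ against an arbitrary $a\in A$, one gets
$$\beta_A\bigl(a,f^*(\F_B(f(b)))\bigr)=\beta_B\bigl(f(a),\F_B(f(b))\bigr)=\tau_B\bigl(f(b)f(a)\bigr)=\tau_B\bigl(f(ba)\bigr)=\tau_A(ba)=\beta_A\bigl(a,\F_A(b)\bigr),$$
where the successive equalities use the definition of $f^*$, the defining property of $\F_B$, that $f$ is an algebra homomorphism, that $f$ intertwines the Haar states, and the defining property of $\F_A$. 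Since this holds for all $a\in A$, non-degeneracy of $\beta_A$ forces $f^*(\F_B(f(b)))=\F_A(b)$; applying $f^*\inv$ and letting $b$ range over $A$ gives $\F_B\compose f=f^*\inv\compose\F_A$. Equivalently, the computation identifies $f^*$ with $\F_A\compose f\inv\compose\F_B\inv$, so that $f^*\inv=\F_B\compose f\compose\F_A\inv$ is precisely the map in the statement.

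The one point that genuinely needs care is the clause that $f^*\inv$ is \emph{continuous}. I would handle it by noting that $f\inv\colon B\arrow A$ is again a C*-algebra isomorphism intertwining the Haar states, so it has its own transpose $(f\inv)^*\colon\Ah\arrow\Bh$, and the computation above, applied to $f\inv$ in place of $f$, shows $(f\inv)^*$ and $f^*$ are mutually inverse; hence $f^*\inv=(f\inv)^*$ is itself a transpose of a C*-isomorphism. It then remains to see that any such transpose is bounded, which is the analytic content: it follows from the fact that the pairing realizes each dual algebra as a topological subspace of the respective Banach-space dual, $\Ah\hookrightarrow A^*$ and $\Bh\hookrightarrow B^*$, concretely via the GNS representation attached to the Haar state, which implements $f$ by the unitary $U$ determined by $U\Lambda_A(a)=\Lambda_B(f(a))$. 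I expect this boundedness step to be the main obstacle; the algebraic identity itself is an immediate consequence of the pairing.
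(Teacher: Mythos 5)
Your argument is correct and is essentially the paper's own proof: both hinge on the same one-line computation establishing $f^*\compose\F_B\compose f=\F_A$ (yours phrased through the pairing $\beta$, the paper's directly on the functionals $\tau_B(f(a)\,\cdot\,)$, using only multiplicativity of $f$ and the intertwining of Haar states), followed by inverting $f^*$. The only divergence is in the continuity clause, where the paper applies the open mapping theorem to $f^*$ while you identify $f^*\inv$ with $(f\inv)^*$; either route is fine.
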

 
\begin{proof}
The Fourier transform on $A$  can be written as $\F_A\colon a \mapsto \tau_A (a\cdot).$  A similar statement holds for the Fourier transform on $B.$ Then, using the property that $\tau_B(f(x))=\tau_A(x)$ for all $x\in A$, we get
\begin{align*}
f^*\compose (\tau_B (f(a) \cdot)) &=  \tau_B(f(a) f(\cdot))\\
&= \tau_B(f(a (\cdot)))\\
&= \tau_A(a \cdot).\end{align*}
From the above it follows that $f^*\compose\F_B \compose f = \F_A.$ 
That $f^*\inv$ exists follows from the fact that $f$ is invertible and the inverse induces a map of dual algebras. That $f^*\inv$ is continuous is seen by applying the open mapping theorem to $f^*.$ Composing  $f^*\compose\F_B \compose f = \F_A$ with   $f^*\inv$ from the left, we have the desired conclusion. 
\end{proof}

\begin{lemma} Let $A$ and $B$ be discrete multiplier Hopf C*-algebras that have a tracial Haar weight. 
Let $f:A\longrightarrow B$ be a C*-isomorphism that intertwines Haar states, and whose induced map on $K$-theory is  $\fancyK$-co-multipli\-ca\-tive. \label{lem:cotraces}\label{lem:de-projectivization}
The map $f^*$ induced by $f$ on the dual algebra(s) satisfies 
$$g(f^*\inv(y_1 y_2))= g(f^*\inv(y_1) f^*\inv (y_2))$$
 for all $y_i$ in the dual algebra $\Ah$,  and all bounded cotracial linear functionals $g.$ 
\end{lemma}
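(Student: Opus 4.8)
The plan is to transport the statement, via the Fourier transform, to an identity about the convolution products of $A$ and $B$, where the $\fancyK$-co-multi\-pli\-ca\-tivity hypothesis can be unpacked by means of Proposition \ref{prop:box.and.convolve}. By Lemma \ref{lem:pullbacks.and.Fourier} we have $f^*\inv=\F_B\compose f\compose\F_A\inv$, and $f^*\inv$ is continuous. Writing a given pair $y_i\in\Ah$ as $y_i=\F_A(\ell_i)$ and using the defining relation $\F(a\convolve b)=\F(a)\F(b)$ of the convolution, one computes $f^*\inv(y_1y_2)=\F_B\bigl(f(\ell_1\convolve\ell_2)\bigr)$ and $f^*\inv(y_1)f^*\inv(y_2)=\F_B\bigl(f(\ell_1)\convolve f(\ell_2)\bigr)$, the first convolution being formed in $A$ and the second in $B$. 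Hence, setting $h:=g\compose\F_B$, the assertion reduces to
$$h\bigl(f(\ell_1\convolve\ell_2)\bigr)=h\bigl(f(\ell_1)\convolve f(\ell_2)\bigr),$$
a statement now internal to $A$ and $B$.

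First I would check that $h$ is a \emph{tracial} linear functional on $B$; this is the one place where the cotraciality of $g$ is used. The Fourier transform carries the product of a Hopf C*-algebra to the convolution product of its dual (the statement dual to the one defining $\convolve$), so the identity $g(\omega_1\convolve\omega_2)=g(\omega_2\convolve\omega_1)$ transports through $\F_B$ to $h(b_1b_2)=h(b_2b_1)$ for all $b_i\in B$; when $g$ is bounded, so is $h$. Since $B$ is a $c_0$-direct sum of matrix blocks, every tracial functional on it is a finite complex-linear combination of positive traces in $T(B)$, so it is enough to prove the displayed identity with an arbitrary $\sigma\in T(B)$ in place of $h$.

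Next I would reduce, using bilinearity of $\convolve$ on $A$ and on $B$ together with the fact that within each matrix block the self-adjoint part is the real span of the minimal projections, to the case $\ell_1=p$, $\ell_2=q$ with $p,q$ minimal projections of $A$, that is, generators of $K$-theory. For such $p$ and $q$, Proposition \ref{prop:box.and.convolve} applied in $B$ gives $\sigma\bigl(f(p)\convolve f(q)\bigr)=(\sigma\tensor t)\bigl(f(p)\boxproduct f(q)\bigr)$, while the same proposition applied in $A$ to the trace $\sigma\compose f\in T(A)$ gives $\sigma\bigl(f(p\convolve q)\bigr)=\bigl((\sigma\compose f)\tensor t\bigr)(p\boxproduct q)=(\sigma\tensor t)\bigl((f\tensor\Id)(p\boxproduct q)\bigr)$, where $t$ is the standard trace on $\compact$. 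The two right-hand sides agree by the very definition of $\fancyK$-co-multi\-pli\-ca\-tivity, applied to the generators $p,q$; this gives the identity for all $\ell_i$ in the algebraic core. The general case $y_i\in\Ah$ then follows by continuity, using that $g$ is bounded, that $f^*\inv$ and the multiplication of $\Ah$ are continuous, and that the elements $\F_A(\ell)$ with $\ell$ in the algebraic core of $A$ are dense in $\Ah$.

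The step I expect to require the most care is the second paragraph — verifying that $g\compose\F_B$ really is a trace on $B$. This rests on the product/convolution duality of the Fourier transform and on keeping the order of the factors (and the antipode) straight; once that is in hand, the remainder is an assembly of Proposition \ref{prop:box.and.convolve}, Lemma \ref{lem:pullbacks.and.Fourier}, the definition of $\fancyK$-co-multi\-pli\-ca\-tivity, and a routine density argument.
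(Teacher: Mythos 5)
Your proposal is correct and follows essentially the same route as the paper's proof: both reduce the statement, via Lemma \ref{lem:pullbacks.and.Fourier} and the Fourier transform, to the identity $T(f(p)\convolve f(q))=T(f(p\convolve q))$ for tracial functionals $T$ on $B$ and minimal projections $p,q$, which is exactly what Proposition \ref{prop:box.and.convolve} extracts from $\fancyK$-co-multiplicativity, and then both conclude by bilinearity over minimal projections and a density/continuity argument in $\Ah$. The only cosmetic difference is that you make the cotracial-to-tracial correspondence explicit via $h=g\compose\F_B$ (correctly flagging the antipode normalization in Fourier inversion, which is harmless since the antipode is an anti-homomorphism), whereas the paper routes the same identification through central elements $z$ and the pairing $\beta(z,\cdot)$.
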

\begin{proof} Let $T$ be a trace on $B,$ and let $\tau$ be the Haar state of $B.$ 
From the hypothesis that the given map $f$ respects the product $\boxproduct,$ we  have  $$ (T\tensor\tau)(f(p)\boxproduct f(q))=(T\tensor\tau)((f\tensor\Id)(p\boxproduct q)),$$
where $p$ and $q$  are minimal projections of $A$. By Proposition \ref{prop:box.and.convolve}, it follows that
$$ T(f(p)\convolve f(q))=T(f(p\convolve q)),$$ where $\convolve$ denotes  convolution of operators.

Passing to linear combinations of traces, we thus have, for all tracial linear functionals $T$ on $B$, that
$$ T(f(p)\convolve f(q))=T(f(p\convolve q)).$$ 
Since the tracial linear functional $T$ can be written in terms of the Haar state $\tau$ and a central element $z$ as $T(x)=\tau(zx),$ it follows from the definition of the Fourier transform, $\beta(a,\F(b))=\tau(ba),$ that $T(x)=\beta(z,\F(x)).$ Putting this into the above equation, we obtain
$$ \beta(z,\F(f(p))\F(f(q)))=\beta(z,\F(f(p\convolve q))).$$

We use Lemma \ref{lem:pullbacks.and.Fourier}  to express the action of $f$ on the dual algebras in terms of the Fourier transforms, obtaining  $\F\compose f= f^*\inv\compose\F.$ We conclude that, denoting the Fourier transforms of $p$ and $q$ by $\hat{p}$ and $\hat{q}$ respectively,
$$ \beta(z,f^*\inv ( \hat{p} ) f^*\inv (\hat{q}))=\beta(z,f^*\inv(\hat{p} \hat{q}))$$
for any central element $z\in A.$ 

Regarding the above as an equality of bilinear forms in $\hat{p}$ and $\hat{q},$ we notice that we can replace  $\hat{p}$ and $\hat{q}$ by  finite linear combinations of elements  having the same properties as, respectively, $\hat{p}$ and $\hat{q}.$ Since any element of $A$ that is supported in finitely many matrix blocks of $A$ can be written as a finite linear combination of minimal projections, we conclude that 
\begin{equation}g(f^*\inv(y_1 y_2))= g(f^*\inv ( y_1 ) f^*\inv (y_2)),\label{eq:g}\end{equation}
for any cotracial functional $g$ and all $y_i\in \Ah$ given by the Fourier transform of an element supported in finitely many matrix blocks.
 But then the equation $g(f^*\inv(y_1 y_2))= g(f^*\inv ( y_1 ) f^*\inv (y_2))$ holds for all $y_i$ in a dense subset of the unital C*-algebra $\Ah,$ and since $g$ and $f^*\inv$ are continuous, the equation then holds for all $y\in \Ah.$ 
 \end{proof}

We now have need of an improvement of a result from \cite{Wolff}. In the statement of our result, the notation $\prime$ is used to denote a relative commutant  within $B.$ 
\begin{proposition} Let $A$ and $B$ be C*-algebras, with $A$ unital. Let $f\colon A\arrow B$ be a linear and positive map that takes orthogonal positive operators to orthogonal positive operators. 
Let $C:=\overline{f(1_A)\{f(1_A)\}'}.$ Then, $f(A)\subseteq C$ and there is a Jordan homomorphism $\pi\colon A\arrow\Mult(C)$ such that
$f(a)=f(1_A )\pi(a)$ for all $a\in A.$\label{prop:Wolff.positive.case}\end{proposition}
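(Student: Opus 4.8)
The plan is to follow the classical Jordan-morphism-from-orthogonality-preserving-map argument (as in Wolff's paper), with the modification that we do not assume $f$ is a $*$-map or bounded, only that it is linear, positive, and sends orthogonal positive elements to orthogonal positive elements. First I would record the basic structure: since $f$ is positive, $f(1_A)\geq 0$, and I want to show $f(A)$ lies in the hereditary-type corner $C=\overline{f(1_A)\{f(1_A)\}'}$. For a positive $a\in A$ with $0\le a\le 1$, write $1_A=a+(1_A-a)$; these two positive elements are not orthogonal, so I cannot conclude orthogonality of images directly. Instead I would use spectral projections / functional-calculus approximations: for a projection $p\in A$, $p$ and $1_A-p$ are orthogonal positives, so $f(p)f(1_A-p)=0=f(1_A-p)f(p)$ in the sense of orthogonal positive operators (their product is zero and both are positive), hence $f(p)f(1_A)=f(p)^2$ and $f(1_A)$ commutes with $f(p)$. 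By linearity and the fact that finite real-linear combinations of projections are norm-dense in the self-adjoint part of a unital C*-algebra, I get that $f(1_A)$ commutes with $f(a)$ for every self-adjoint $a$, hence for all $a\in A$, and also that $f(a)\in\{f(1_A)\}'$; combined with positivity of $f(a)$ for $a\ge 0$ one checks $f(a)\in C$, giving $f(A)\subseteq C$.

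Next I would define $\pi$. On the corner $C$ the element $f(1_A)$ is strictly positive (it acts as the identity "up to closure"), so it has an inverse $f(1_A)^{-1}$ affiliated to $\Mult(C)$; set $\pi(a):=f(1_A)^{-1}f(a)$, interpreted in $\Mult(C)$. This is clearly linear, and $\pi(1_A)=1_{\Mult(C)}$. The substantive point is that $\pi$ is a Jordan homomorphism, i.e. $\pi(a^2)=\pi(a)^2$ for self-adjoint $a$ (equivalently $\pi(ab+ba)=\pi(a)\pi(b)+\pi(b)\pi(a)$). The standard route: for a projection $p$, the relations $f(p)f(1_A)=f(p)^2$ and $[f(1_A),f(p)]=0$ translate to $\pi(p)=\pi(p)^2$, so $\pi$ sends projections to idempotents that are in fact positive (being, after multiplying by the positive commuting $f(1_A)$, equal to positive operators), hence to projections in $\Mult(C)$. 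Then for two orthogonal projections $p,q$ one gets $\pi(p)\pi(q)=0$, and for a general self-adjoint $a$ one approximates by real-linear combinations of projections $\sum\lambda_i p_i$ with the $p_i$ mutually orthogonal (using the finite-dimensional, or approximately finite-dimensional, spectral decomposition inside $A$ — here $A$ unital is exactly what lets us do this cleanly in norm) and passes to the limit using continuity of $\pi$, which follows from continuity of $f$ (a positive linear map between C*-algebras with $A$ unital is automatically bounded) together with boundedness of left multiplication by $f(1_A)^{-1}$ on $C$.

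The main obstacle I expect is controlling the unbounded object $f(1_A)^{-1}$ and the multiplier algebra $\Mult(C)$: one must make sense of $\pi(a)=f(1_A)^{-1}f(a)$ as a genuine element (not merely an affiliated operator) of $\Mult(C)$, and verify that the Jordan identity, proved first on projections and their finite orthogonal linear combinations, survives the limiting process in the strict topology rather than the norm topology. Concretely, $f(a)$ for $a\ge 0$ is dominated by $\|a\|f(1_A)$, which is precisely what forces $f(1_A)^{-1}f(a)$ to be a bounded element of the corner; this domination, applied to differences of approximants, is what I would use to upgrade the projection-level identity to all of $A$. A secondary technical point is the passage from "orthogonal positive operators" in the hypothesis to the algebraic identity $f(p)f(1_A-p)=0$: orthogonality of positive operators $x,y$ (meaning $xy=0$) does give $xy=yx=0$, so this is routine, but it must be invoked carefully since the hypothesis is stated for positive elements only and one has to split general self-adjoint elements into positive parts and track signs.
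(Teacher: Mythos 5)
Your argument has a genuine gap at its foundation: the claim that finite real-linear combinations of projections are norm-dense in the self-adjoint part of a unital C*-algebra is false in general. It holds for von Neumann algebras and for C*-algebras of real rank zero, but the proposition is stated for an arbitrary unital $A$; in $C[0,1]$, for instance, the only projections are $0$ and $1$, so the span of projections is just the scalars. Since your entire bootstrap --- establishing $[f(1_A),f(p)]=0$ and $\pi(p)^2=\pi(p)$ on projections $p$, and then approximating a self-adjoint $a$ by orthogonal combinations $\sum\lambda_i p_i$ --- starts from a supply of projections that $A$ need not possess, the proof does not get off the ground in the stated generality.

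Beyond that, you are attempting to reprove Wolff's theorem from scratch, including the delicate construction of $\pi(a)=f(1_A)^{-1}f(a)$ inside $\Mult(C)$, which you correctly identify as the hard technical point but do not resolve. The paper's proof is a reduction, not a reconstruction: it shows only that $f$ sends orthogonal \emph{self-adjoint} elements to orthogonal self-adjoint elements, and then invokes Wolff's Theorem 2.3, which already supplies $C$, the Jordan homomorphism $\pi$, and the factorization $f(a)=f(1_A)\pi(a)$. That reduction works for arbitrary unital $A$: given orthogonal self-adjoint $a,b$, the unital C*-subalgebra they generate is commutative, so the restriction $\phi$ of $f$ to it is completely positive (Stinespring); the Schwarz inequality for completely positive maps gives $\phi(a^2)\ge\phi(a)^2\ge 0$, and since $a^2$ and $b^2$ are orthogonal positives the hypothesis makes $\phi(a^2)$ and $\phi(b^2)$ orthogonal; cutting down by $\phi(b^2)$ and then by $\phi(a)$ yields $\phi(a)\perp\phi(b)$. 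To salvage your approach you would need to replace the projection arguments by this kind of functional-calculus argument valid in projectionless algebras, or simply cite Wolff's structure theorem as the paper does.
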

\begin{proof}
If we can show that $f$ takes self-adjoint orthogonal elements to self-adjoint orthogonal elements, the conclusion we want is then
provided by \cite[Theorem 2.3]{Wolff}.

We may as well suppose that $\norm{f}\leq1.$ Let $a$ and $b$ be orthogonal self-adjoint elements of $A.$ The sub-C*-algebra generated by $1$, $a$, and $b$ is abelian and will be denoted $C(X).$ Denoting the restriction of $f$ to $C(X)$ by $\phi,$ we note that $\phi$ is completely positive (see \cite{Stinespring}).
By the Cauchy-Schwartz inequality for completely positive maps, $$\phi(a^2)\geq\phi(a)^2\geq0.$$ By hypothesis, $\phi(a^2)$ and $\phi(b^2)$ are orthogonal positive elements, so cutting down the above inequality on both sides by the positive element $\phi(b^2),$ we conclude that $\phi(a)$ is orthogonal to $\phi(b^2).$ Similarly, cutting down $\phi(b^2)\geq\phi(b)^2\geq0$ on left and right by $\phi(a),$ we then have as required that $\phi(a)$ and $\phi(b)$ are orthogonal self-adjoint elements.
\end{proof}
 \begin{proposition}Let $A$ and $B$ be discrete multiplier Hopf C*-algebras that have tracial Haar weights.  Let $f\colon A\longrightarrow B $ be a $\fancyK$-co-multiplicative C*-isomorphism that intertwines co-units and antipodes.
  Then
the pullback map $f^*$ is a Jordan homomorphism \label{prop:box.implies.jordan}\end{proposition}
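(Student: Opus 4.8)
The plan is to show that the induced map $f^*\inv\colon\Ah\arrow\Bh$ is a Jordan homomorphism; it then follows at once that $f^*$ is one, since the inverse of a bijective Jordan homomorphism between C*-algebras is again a Jordan homomorphism. I would obtain the Jordan property by checking the hypotheses of Proposition~\ref{prop:Wolff.positive.case} for $f^*\inv$. First observe that Lemma~\ref{lem:cotraces} is applicable here: a C*-isomorphism of discrete multiplier Hopf C*-algebras intertwines the Haar weights, since these are determined block by block by the matrix-block structure, which C*-isomorphisms preserve. In particular $f^*\inv$ is a continuous bijection of the unital C*-algebras $\Ah$ and $\Bh$ with continuous inverse.

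Next I would settle the two easy structural properties, which is where the co-unit and antipode hypotheses enter directly. The co-unit of $A$ is represented under the pairing by the identity of $\Ah$, so the relation $\counit_B\compose f=\counit_A$ translates into $f^*\inv(1_{\Ah})=1_{\Bh}$, so $f^*\inv$ is unital. The involution on a dual Hopf C*-algebra is expressed through the antipode and the conjugated pairing by a formula of the shape $\beta(a,\omega^*)=\overline{\beta(\antipode(a)^*,\omega)}$, so the facts that $f$ is a $*$-homomorphism and that $\antipode_B\compose f=f\compose\antipode_A$ combine to give $f^*\inv(\omega^*)=f^*\inv(\omega)^*$, so $f^*\inv$ is self-adjoint.

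The substantive step, and the one I expect to be the main obstacle, is positivity of $f^*\inv$. Since the Haar weights are tracial, the duals $\Ah,\Bh$ are of Kac type, so the Haar state $\tau_{\Bh}$ of $\Bh$ is a faithful trace, and it is cotracial because the Haar state satisfies $\tau(a\convolve b)=\tau(a)\tau(b)$. Combining Lemma~\ref{lem:cotraces} (with $g=\tau_{\Bh}$) with self-adjointness of $f^*\inv$, one checks routinely that $\rho:=\tau_{\Bh}\compose f^*\inv$ is a faithful tracial state on $\Ah$ and that $\tau_{\Bh}\big(f^*\inv(xy)\big)=\tau_{\Bh}\big(f^*\inv(x)f^*\inv(y)\big)$ for all $x,y\in\Ah$. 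These soft facts do not by themselves deliver positivity of $f^*\inv$ — indeed, if $f^*\inv$ and $f^*$ were both positive then Kadison's theorem would already force the Jordan property — so this is the point at which the antipode hypothesis must be used more forcefully: in the Kac case the positive cone of a dual Hopf C*-algebra is the cone of \kpositive elements, and a $*$-isomorphism intertwining the antipodes dualizes to a map respecting \kpositivity, so $f^*\inv(\Ah^+)\subseteq\Bh^+$.

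With positivity available, the remaining hypothesis of Proposition~\ref{prop:Wolff.positive.case} is immediate: if $y_1,y_2\geq0$ in $\Ah$ with $y_1y_2=0$, then $f^*\inv(y_1)$ and $f^*\inv(y_2)$ are positive, and Lemma~\ref{lem:cotraces} applied with $g=\tau_{\Bh}$ gives $\tau_{\Bh}\big(f^*\inv(y_1)f^*\inv(y_2)\big)=\tau_{\Bh}\big(f^*\inv(y_1y_2)\big)=0$, whence $f^*\inv(y_1)f^*\inv(y_2)=0$ because $\tau_{\Bh}$ is a faithful trace. Thus $f^*\inv$ is linear, positive, and carries orthogonal positive elements to orthogonal positive elements, so Proposition~\ref{prop:Wolff.positive.case} yields a Jordan homomorphism $\pi$ with $f^*\inv(a)=f^*\inv(1_{\Ah})\pi(a)$; since $f^*\inv(1_{\Ah})=1_{\Bh}$ the relative commutant involved is all of $\Bh$, so $f^*\inv=\pi$ is a Jordan homomorphism, and therefore so is $f^*$.
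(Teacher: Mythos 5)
Your overall architecture --- reduce to $f^*\inv$, verify unitality, self-adjointness, positivity, and preservation of orthogonality, then invoke Proposition~\ref{prop:Wolff.positive.case} --- matches the paper's, and the unitality, self-adjointness and orthogonality steps are fine (the last is exactly the paper's trace-faithfulness argument). The gap is the positivity step. You correctly note that the ``soft'' consequences of Lemma~\ref{lem:cotraces} do not by themselves give positivity, but the substitute you offer --- that the positive cone of a dual Hopf C*-algebra is the cone of \kpositive elements, so that an antipode-intertwining $*$-isomorphism dualizes to a positive map --- is not a result you can cite (the term is never given content in this setting), and it is not true as stated. Positivity in $\Bh$ means membership in $\{y^*y\}$, which depends on the multiplicative structure of $\Bh$; it is not determined by the involution and the antipode alone. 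In the cocommutative case $A=c_0(G)$, $\Ah=C^*_r(G)$, a unital self-adjoint linear bijection intertwining antipodes is merely one preserving a certain real subspace, and such maps are nowhere near automatically positive. Since the entire point of the proposition is to extract multiplicative information from weaker data, asserting that positivity comes for free from the $*$- and $\antipode$-structure is essentially circular.

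The ingredient you need is already in your hands and you even wrote down its bilinear case: the $n$-linear extension of Lemma~\ref{lem:cotraces} gives $\tau_{\Bh}\bigl(f^*\inv(x)^n\bigr)=\tau_{\Bh}\bigl(f^*\inv(x^n)\bigr)=\tau_{\Ah}(x^n)$ for every $n$. For self-adjoint $x$ this, combined with the Stone--Weierstrass theorem and the boundedness of the Haar state, yields $\tau_{\Bh}\bigl(g(f^*\inv(x))\bigr)=\tau_{\Ah}\bigl(g(x)\bigr)$ for every continuous function $g$; faithfulness of the Haar state then forces ${\rm sp}(f^*\inv(x))\subseteq{\rm sp}(x)$, and running the same argument for the inverse map gives equality of spectra. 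A self-adjoint element with spectrum in $[0,\infty)$ is positive, so $f^*\inv$ is positive. With that replacement your proof closes and coincides with the paper's.
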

 \begin{proof} The Haar states extend to the canonical trace on the GNS representation, each matrix block being represented with multiplicity given by its matrix dimension. Since the Haar states are thus determined by C*-algebraic data, the C*-isomorphism $f$ must  intertwine the canonical traces, and hence intertwines the Haar states. 
Applying Lemma \ref{lem:cotraces} to $f\inv$, we have that $f^*$ is at least multiplicative under cotracial linear functionals. In fact, replacing bi-linear forms by $n$-linear forms in the proof of Lemma \ref{lem:cotraces}, we have   
\begin{equation}g(f^*(b_1)f^*(b_2)f^*(b_3) \cdots f^* (b_n))=g(f^*(b_1b_2b_3\cdots b_n)) \label{eq:approx.mult}\end{equation}
for all cotracial functionals $g$ and all $b_i\in\Bh.$. We note that $f^*$ is a linear and unital map that maps  self-adjoint elements to self-adjoint elements,  and intertwines the canonical traces $\tau$ coming from the GNS representations. We deduce from the above equation that
$$\tau(f^*(x)^n)=\tau(f^*(x^n)),$$
where $x$ is in the compact Hopf C*-algebra $\Bh,$ and $\tau$ is the tracial (and co-tracial) Haar state of the compact Hopf C*-algebra $\Ah.$ Since $\tau$ is a bounded linear functional, it follows from the Stone-Weierstrass theorem on $[0,1]$ that $\tau(g(f^*(x)))=\tau(f^*(g(x)))$ for all self-adjoint elements $x$ and all continuous functions $g.$  From this we have ${\rm sp}(f^* (x))\subseteq {\rm sp}(x).$ The reason is that otherwise there would exist a continuous function $g$ such that $g(f^*(x))$ is positive and nonzero while $g(x)$ is zero, which would contradict the fact that $\tau$ is a faithful linear functional. 
Applying the same argument to the inverse map, we conclude that in fact $f^*$ preserves the spectrum of self-adjoint operators. Since a self-adjoint operator with spectrum contained in $[0,\infty)$ is positive, it follows that $f^*$ maps a positive operator to a positive operator.

Since by equation \eqref{eq:approx.mult} we have $\tau(f^*(b_1)f^*(b_2))=\tau(b_1b_2),$ it follows that if $b_1$ and $b_2$ are orthogonal positive operators, then $f^*(b_1)$ and $f^*(b_2)$ are positive operators which are tracially orthogonal. But, denoting these operators by $a_1$ and $a_2$, we then have 
$$0=\tau(a_1a_2)=\tau((a_2)^{1/2} a_1 (a_2)^{1/2}),$$ from which it follows, by the faithfulness of the linear functional $\tau,$ that $a_1^{1/2}a_2^{1/2}=0,$
implying that $a_1$ and $a_2$ are orthogonal positive operators. 
Thus, $f^*$ maps orthogonal positive operators to orthogonal positive operators, and by Proposition \ref{prop:Wolff.positive.case} is a Jordan homomorphism.
\end{proof}

We now give a slight generalization of  \cite[Lemma 2.7]{kucerovsky.BiU}. We note that in the following the pullback is required to be a Jordan *-homomorphism, not just a Jordan homomorphism.
\begin{lemma} Let $A$ and $B$ be  discrete  Hopf C*-algebras with tracial Haar weights. Let $\alpha\colon A\arrow B$ be a  *-isomorphism, and let $\hat{\alpha}\colon \Bh\arrow\Ah$ be its action on the dual. We suppose the action $\hat{\alpha}$ on the dual is a Jordan *-isomorphism. Then either $\hat\alpha$ is multiplicative, or $\hat\alpha$ is anti-multiplicative. \label{lem:jordan.auto} The same conclusion holds if discrete is replaced by compact.\end{lemma}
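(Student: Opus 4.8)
The plan is to first extract, from Jordan‑theoretic structure theory alone, a decomposition of $\hat\alpha$ into a multiplicative and an anti‑multiplicative part along a central projection, and then to use the coproduct of $\Bh$ — via the observation that $\hat\alpha$ automatically intertwines comultiplications, because $\alpha$ is multiplicative — to force that central projection to be trivial.

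First I would record the Jordan splitting. A Jordan $*$-isomorphism preserves self‑adjointness and carries the centre onto the centre, so $\hat\alpha$ restricts to a $*$-isomorphism $Z(\Bh)\isom Z(\Ah)$; by the Jacobson--Rickart theorem (in the C*-setting, St\o rmer's analysis of the Jordan structure of a C*-algebra) there is then a central projection $z\in\Mult(\Bh)$ with $\Bh=z\Bh\oplus(1-z)\Bh$ such that $\hat\alpha$ restricts to a $*$-isomorphism $z\Bh\to z'\Ah$ and to a $*$-anti-isomorphism $(1-z)\Bh\to(1-z')\Ah$, where $z'=\hat\alpha(z)$. (In the discrete case this is elementary: $\hat\alpha$ matches up matrix blocks via the induced bijection of minimal central projections, and a Jordan $*$-isomorphism between full matrix algebras is an isomorphism or an anti-isomorphism.) The lemma is exactly the claim that $z\in\{0,1\}$. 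I may also assume that each of $z\Bh$ and $(1-z)\Bh$ has a noncommutative matrix block: if, say, $z\Bh$ were commutative then $\hat\alpha|_{z\Bh}$ is an anti-isomorphism as well, and since $z\Bh$ and $(1-z)\Bh$ annihilate each other this makes $\hat\alpha$ globally anti-multiplicative. So I suppose for contradiction that $z\ne0,1$ with both $z\Bh$ and $(1-z)\Bh$ noncommutative.

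Next I would bring in the Hopf structure. Transposing $\alpha(ab)=\alpha(a)\alpha(b)$ through the pairing $\beta$ (the product of $A$, resp. $B$, being the transpose of the coproduct of $\Ah$, resp. $\Bh$) gives $\delta_{\Ah}\compose\hat\alpha=(\hat\alpha\tensor\hat\alpha)\compose\delta_{\Bh}$; slicing one leg of this identity with the counit and using the counit axioms yields in addition $\counith_{\Ah}\compose\hat\alpha=\counith_{\Bh}$, so $\counith_{\Bh}(z)\in\{0,1\}$. Because $\delta_{\Ah}$ and $\delta_{\Bh}$ are honest $*$-homomorphisms, the composite $\Phi:=\delta_{\Ah}\compose\hat\alpha=(\hat\alpha\tensor\hat\alpha)\compose\delta_{\Bh}$ is again a Jordan $*$-homomorphism $\Bh\arrow\Mult(\Ah\tensor\Ah)$, in fact multiplicative on $z\Bh$ and anti-multiplicative on $(1-z)\Bh$; and since $\delta_{\Bh}$ is an injective $*$-homomorphism, $\Phi$ is nothing but $\hat\alpha\tensor\hat\alpha$ restricted to the C*-subalgebra $\delta_{\Bh}(\Bh)\subseteq\Mult(\Bh\tensor\Bh)$. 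Now split $\Bh\tensor\Bh$ into its four corners for $\Bh=z\Bh\oplus(1-z)\Bh$: on $z\Bh\tensor z\Bh$ the map $\hat\alpha\tensor\hat\alpha$ is a $*$-homomorphism, on $(1-z)\Bh\tensor(1-z)\Bh$ a $*$-anti-homomorphism, but on each mixed corner, say $z\Bh\tensor(1-z)\Bh$, it is the tensor product of a $*$-isomorphism with a $*$-anti-isomorphism, which — since both factors carry a noncommutative block — is \emph{not} a Jordan map, as one checks on $2\times2$ matrix units. Hence compatibility of $\Phi$ with the corner decomposition forces $\delta_{\Bh}$ to avoid the mixed corners, i.e. $\delta_{\Bh}(1)=1\le z\tensor z+(1-z)\tensor(1-z)$, whence $z\tensor(1-z)+(1-z)\tensor z=0$ and $z\in\{0,1\}$ — the contradiction sought.

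The hard part will be the last deduction: that the failure of $\hat\alpha\tensor\hat\alpha$ to be Jordan on the noncommutative part of a mixed corner really does prevent $\delta_{\Bh}$ from having support there. This is not visible from the single projection $\delta_{\Bh}(z)$ alone (being a sum of block units it is automatically a projection for both the given and the opposite product on a mixed corner, so the obstruction disappears at that level); it must instead be extracted from the full maps $\Phi$ and $\hat\alpha\tensor\hat\alpha$ together with the non-degeneracy packaged in the Hopf axioms — through $(\counith_{\Bh}\tensor\Id)\delta_{\Bh}=\Id$ and coassociativity, which should make any decomposition $\Bh=\Bh_1\oplus\Bh_2$ with $\delta_{\Bh}(\Bh)$ supported on $\Bh_1\tensor\Bh_1$ and $\Bh_2\tensor\Bh_2$ trivial. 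That the coproduct must be invoked at all is clear, since the algebra-only statement is false: $(a,b)\mapsto(a,b^{t})$ is a Jordan $*$-automorphism of $\M{2}\oplus\M{2}$ which is neither multiplicative nor anti-multiplicative. Finally, the whole argument uses only that $\Bh$ and $\Ah$ are Hopf C*-algebras in the given duality and that $\hat\alpha$ is a comultiplication-preserving Jordan $*$-isomorphism, which is symmetric in a Hopf C*-algebra and its dual; so the same proof gives the statement with ``discrete'' replaced by ``compact''.
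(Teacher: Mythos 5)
Your proof follows a genuinely different route from the paper's, and it has a real gap at exactly the point you yourself flag as ``the hard part,'' so as it stands it is a strategy rather than a proof. For contrast: the paper disposes of the lemma in three lines --- a Jordan $*$-isomorphism is an isometry (Kadison), hence the transposed map on the preduals is isometric, and the De Canni\`ere--Enock--Schwartz generalization of Wendel's theorem \cite[Th\'eor\`eme 2.9]{CES} for isometric algebra isomorphisms of preduals of Kac algebras then delivers the multiplicative/anti-multiplicative dichotomy directly, with all the Hopf-theoretic work outsourced to that citation. Your alternative --- St\o rmer's central splitting $\Bh=z\Bh\oplus(1-z)\Bh$ into a multiplicative and an anti-multiplicative part, combined with $\delta_{\Ah}\compose\hat\alpha=(\hat\alpha\tensor\hat\alpha)\compose\delta_{\Bh}$ --- is a sensible, more self-contained strategy, and your example $(a,b)\mapsto(a,b^{t})$ on $\M{2}\oplus\M{2}$ correctly isolates why the coproduct must enter.

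The gap is the assertion that ``compatibility of $\Phi$ with the corner decomposition forces $\delta_{\Bh}$ to avoid the mixed corners.'' Knowing that $\phi\tensor\psi$ (isomorphism tensor anti-isomorphism) fails to be Jordan on the full corner $z\Bh\tensor(1-z)\Bh$ does not show it fails on the subalgebra $(z\tensor(1-z))\delta_{\Bh}(\Bh)$: for $x=\sum a_i\tensor b_i$ the Jordan defect works out to $\sum_{i<j}\phi([a_i,a_j])\tensor\psi([b_j,b_i])$, so the obstruction is visible only on elements whose legs fail to commute in \emph{both} factors simultaneously. The density axioms give that the compression of $\delta_{\Bh}(\Bh)$ to a corner $p\Bh\tensor q\Bh$ of two matrix blocks is nonzero, but not that it contains such an element. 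Moreover, when $z\Bh$ or $(1-z)\Bh$ mixes commutative and noncommutative blocks, the obstruction lives only on a sub-corner, so even a completed mixed-corner argument would not immediately yield $z\in\{0,1\}$ without a further combinatorial step; your appeal to $(\counith\tensor\Id)\compose\delta_{\Bh}=\Id$ and coassociativity is again only a pointer. To salvage the approach you would need to prove a concrete statement such as: for any two noncommutative matrix blocks $p\Bh$, $q\Bh$ of $\Bh$, the compression of $\delta_{\Bh}(\Bh)$ to $p\Bh\tensor q\Bh$ contains an element $\sum a_i\tensor b_i$ with $\sum_{i<j}[a_i,a_j]\tensor[b_j,b_i]\neq0$ --- or else take the paper's shortcut through \cite{CES}.
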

\begin{proof} A Jordan *-isomorphism maps the C*-norm  unit ball onto itself. If $\hat{\alpha}$ maps the unit ball onto itself, this implies that the map that it induces on linear functionals is an isometry with respect to the usual dual norm (on linear functionals.) Thus, the map $\alpha$ is an isometry with respect to this norm, which makes $\alpha$ an isometry of the pre-dual in the sense of \cite[Th\'eor\`eme 2.9 ]{CES}. It follows that $\hat\alpha$ is either multiplicative or anti-multiplicative, as claimed.
\end{proof}

\begin{theorem} Let $A$ and $B$ be discrete Hopf C*-algebras with  tracial Haar weights. Let $f\colon A\longrightarrow B$ be
a  C*-isomorphism that intertwines antipodes and co-units. We suppose that the induced map on $K$-theory intertwines the  products $\boxproduct_A$ and $\boxproduct_B$. Then $A$ and $B$ are isomorphic or co-anti-isomorphic as Hopf algebras.\label{th:banacheweski.Hopf}
\end{theorem}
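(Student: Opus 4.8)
The plan is to assemble the pieces that have already been developed in this section so that the hypotheses of Proposition~\ref{prop:box.implies.jordan} and Lemma~\ref{lem:jordan.auto} are met, and then to upgrade the resulting Jordan structure on the dual into an honest Hopf-algebra statement. First I would observe that, by Proposition~\ref{prop:box.implies.jordan}, the hypotheses on $f$ (a $\fancyK$-co-multiplicative C*-isomorphism intertwining co-units and antipodes) force the pullback $f^*$ to be a Jordan homomorphism on the dual algebras. Because $f$ is a C*-isomorphism, it is in particular a $*$-isomorphism, and one checks that $f^*$ is then a $*$-map as well, so $f^*$ is in fact a Jordan $*$-isomorphism $\Bh\arrow\Ah$. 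This is precisely the hypothesis needed for Lemma~\ref{lem:jordan.auto}, whose conclusion is that $f^*$ is either multiplicative or anti-multiplicative.

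Next I would run the two cases. If $f^*$ is multiplicative, then $f^*\colon\Bh\arrow\Ah$ is an algebra $*$-isomorphism; dualizing, $f$ intertwines the co-products (the co-product on each algebra is dual to the product on the dual), and since by hypothesis $f$ already intertwines antipodes and co-units, $f$ is a Hopf-algebra isomorphism. If instead $f^*$ is anti-multiplicative, then $f^*$ intertwines the product on $\Bh$ with the opposite product on $\Ah$; dualizing, $f$ intertwines $\coproduct{A}$ with $\sigma\compose\coproduct{B}$, i.e. $f$ is a co-anti-homomorphism at the level of co-products. Combined with the intertwining of antipodes and co-units (and using that the antipode of a Hopf algebra is a co-anti-homomorphism, so composing with $\sigma$ is compatible with the antipode relations), this exhibits $A$ and $B$ as co-anti-isomorphic Hopf algebras. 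In either case the conclusion of the theorem follows.

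The main obstacle, and the step that needs the most care, is the passage from ``$f^*$ is (anti-)multiplicative as an algebra map'' to ``$f$ intertwines (co-opposite) co-products,'' because the dual object here is only a multiplier Hopf C*-algebra: the co-product on $B$ takes values in $\Mult(B\tensor B)$ rather than $B\tensor B$, and the pairing $\beta$ between $A$ and $\Ah$ is a pairing of (possibly non-unital, infinite direct sums of matrix) algebras. So one has to check that the duality ``product on $\Ah$ $\leftrightarrow$ co-product on $A$'' is implemented correctly through $\beta$ on the dense subalgebras supported on finitely many matrix blocks, and that multiplicativity of $f^*$ there extends by continuity/density to the multiplier level. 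This is exactly the kind of bookkeeping that Lemma~\ref{lem:pullbacks.and.Fourier} and Lemma~\ref{lem:cotraces} were set up to handle, so I would invoke those rather than redo the pairing computations: the identity $\F_B\compose f=f^*\inv\compose\F_A$ lets one transport the multiplicativity statement between the algebra and its dual cleanly. A secondary point to verify is that intertwining co-units is automatically consistent in the co-anti case (it is, since $\counit$ is an algebra homomorphism and $\counit\compose\sigma$ makes sense on the multiplier algebra), so no extra hypothesis is needed there.
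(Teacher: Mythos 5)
Your argument is essentially the paper's own proof: Proposition~\ref{prop:box.implies.jordan} gives that $f^*$ is Jordan, Lemma~\ref{lem:jordan.auto} upgrades this to (anti-)multiplicativity, and duality plus the antipode hypothesis yields the Hopf (co-anti)isomorphism. The one small imprecision is your claim that $f^*$ is a $*$-map merely because $f$ is a $*$-isomorphism; since the C*-involution on the dual algebra is built from the antipode, it is the hypothesis that $f$ intertwines antipodes (as the paper notes) that makes $f^*$ a Jordan $*$-isomorphism rather than just a Jordan isomorphism.
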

 \begin{proof} By Proposition \ref{prop:box.implies.jordan} we have that under the hypothesis the pullback $f^*\colon\Bh\longrightarrow\Ah$ of $f$ is  a Jordan homomorphism at the level of C*-algebras. The fact that $f$ intertwines antipodes insures that the pullback is a Jordan *-homomorphism (more precisely, a Jordan *-isomorphism). By Lemma \ref{lem:jordan.auto} the pullback map $f^*$ is either multiplicative or anti-multiplicative. We thus have, by duality, that $f$ is either an isomorphism or a co-anti-isomorphism of bi-algebras.
 It follows from uniqueness of the Hopf algebra antipode(s) that $f$ is a Hopf algebra (co-anti)isomorphism. 
 \end{proof}

The above theorem can be re-stated in terms of maps on $K$-theory groups. If we also assume finite-dimensionality,  then this takes on an especially attractive form. Without such additional assumptions, we must for technical reasons introduce some real $K$-theory. 

From the classification theory of AF-algebras\cite{elliott}, we have  a well-known one-to-one correspondence between *-homomorphisms of AF C*\-algebras and homomorphisms of ordered $\fancyK$-theory groups (at least up to certain natural notions of equivalence). 
Each involutory *-anti\-automor\-phism of an AF algebra $A$ has associated with it a real subalgebra $R_A$ of the AF algebra. These subalgebras are called real AF algebras and there exists also a classification of real AF algebras\cite{giordano}, and thus, for a fixed pair of AF algebras with given involutions, there
exists a natural correspondence between the following three objects:
\begin{enumerate}
	\item homomorphisms of ordered real $K$-theory groups, $f\colon K_R (A)\arrow K_R (B),$ 
	\item *-homomorphisms of real subalgebras, $\phi\colon R_A\arrow R_B,$ and
	\item *-homomorphisms $\phi\colon A\arrow B$ that intertwine the given involution(s) of $A$ and $B.$
\end{enumerate}
As before, certain natural notions of equivalence are implicit in the above. Morover, the real $K$-theory of a (real) AF algebra actually consists of three groups, rather than just one as in the complex case. Taking now the case of a discrete multiplier Hopf C*-algebra, the real subalgebra coming from an involutive antipode is the subalgebra of \ksymmetric elements, and the product $\boxproduct$ gives a product on the complexification of $K_{0,R} (R_A).$ (It does not appear to be true in general that the product of real elements is real.) We now have sufficient terminology to state our next theorem:

\begin{theorem} Let $A$ and $B$ be discrete multiplier Hopf C*-algebras with  tracial Haar weights. Suppose $f\colon K_R (R_A)\arrow K_R (R_B)$ is an isomorphism of real $K$-groups whose complexification intertwines the products on  $K_0 (A)$ and $K_0 (B).$ 
We suppose also that $f$ intertwines the $K$-theory states induced by the co-units.  Then $A$ and $B$ are isomorphic or co-anti-isomorphic as Hopf algebras.\label{th:banacheweski.Hopf2}
\end{theorem}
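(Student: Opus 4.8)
The plan is to reduce Theorem~\ref{th:banacheweski.Hopf2} to Theorem~\ref{th:banacheweski.Hopf} by converting the $K$-theoretic hypothesis into the hypothesis of the earlier theorem, namely the existence of a C*-isomorphism $f\colon A\arrow B$ that intertwines antipodes and co-units and whose induced map on $K$-theory is $\boxproduct$-multiplicative. First I would invoke the three-way correspondence recalled just before the statement: the given isomorphism of ordered real $K$-groups $f\colon K_R(R_A)\arrow K_R(R_B)$ corresponds (up to the implicit natural equivalences, using the classification of real AF algebras \cite{giordano} together with Elliott's classification \cite{elliott}) to a *-isomorphism $\phi\colon A\arrow B$ that intertwines the given involutory *-antiautomorphisms. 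Here the given involution on each algebra is the one coming from the involutive antipode composed with the C*-involution, so the real subalgebra is exactly the algebra of \ksymmetric elements; hence $\phi$ intertwines $\antipode_A$ and $\antipode_B$. That $\phi$ intertwines co-units follows from the hypothesis that $f$ intertwines the $K$-theory states induced by the co-units, since the co-unit is the (unique) character whose associated state on $K$-theory is the one specified — intertwining at the level of $K$-theory states pins down the co-unit on the relevant projections, and by linearity and density the co-units are intertwined.

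Next I would check that the induced map of $\phi$ on complex $K_0$ is $\boxproduct$-multiplicative. This is where the complexification hypothesis enters: the product $\boxproduct$ gives a product on the complexification of $K_{0,R}(R_A)$, and the complexification of the real $K$-theory isomorphism is, by construction, the map induced on $K_0(A)$ by $\phi$; the hypothesis says precisely that this complexified map intertwines the products on $K_0(A)$ and $K_0(B)$. Some care is needed because the product of two real ($\antipode$-symmetric) elements need not be real — this is flagged parenthetically in the text — so one genuinely works in the complexification $K_{0,R}(R_A)\otimes\C\cong K_0(A)$ rather than inside $K_{0,R}(R_A)$ itself; the point is just that this isomorphism is natural and compatible with $\phi$, so the complexified $f$ \emph{is} $\phi_*$ on $K_0$. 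With $\boxproduct$-multiplicativity of $\phi_*$ in hand, and $\phi$ a C*-isomorphism intertwining antipodes and co-units, Theorem~\ref{th:banacheweski.Hopf} applies verbatim and yields that $A$ and $B$ are isomorphic or co-anti-isomorphic as Hopf algebras.

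The main obstacle I expect is bookkeeping around the ``natural notions of equivalence'' implicit in the three-way correspondence: one must make sure that the specific *-isomorphism $\phi$ produced by the classification can be taken to intertwine the \emph{chosen} involutions (not merely some conjugate involution), and that its class in $KK$ or in ordered $K_0$ agrees with the given $f$ after complexification, so that the $\boxproduct$-intertwining hypothesis transfers without distortion. A secondary subtlety is that $R_A$, $R_B$ are real AF algebras and the real $K$-theory is a triple of groups, so one should be explicit about which component of $K_R$ carries the ordered-group data used in the correspondence and confirm that the co-unit state and the product $\boxproduct$ both live on $K_{0,R}\otimes\C$ compatibly. Once these identifications are set up cleanly, the deduction is purely formal: translate $K$-theory data into the C*-isomorphism $\phi$, verify it meets the hypotheses of Theorem~\ref{th:banacheweski.Hopf}, and quote that theorem.
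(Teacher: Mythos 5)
Your proposal follows the paper's own argument essentially step for step: lift the real $K$-theory isomorphism via the classification of real AF algebras to a *-isomorphism $\phi\colon A\arrow B$ intertwining the antipodes, use the co-unit $K$-theory states (i.e., the support projection of the co-unit) to see that $\phi$ intertwines co-units, observe that the $\boxproduct$-intertwining hypothesis transfers to $\phi_*$ on complex $K_0$, and then quote Theorem~\ref{th:banacheweski.Hopf}. The subtleties you flag (equivalence classes in the classification, the product of real elements not being real) are acknowledged but handled no more explicitly in the paper than in your sketch, so the two arguments coincide.
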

 \begin{proof} The given map $f$ induces an isomorphism of complex $K$-groups $f\colon K_0 (A)\arrow K_0 (B).$  Since the given map respects the real substructures associated with\footnote{The antipodes are involutive if the Haar weights are tracial\protect{\cite[Prop. 4.8b]{BS}}} the antipodes, we obtain from the classification theory of real C*-algebras an algebra *-isomorphism $\phi\colon A\arrow B$ that intertwines antipodes. Moreover, it follows by construction that the map induced by $\phi$ on $K$-theory intertwines the products $\boxproduct_A$ and $\boxproduct_B.$ By hypothesis, $\phi$ maps the support projection for the co-unit to the support projection for the co-unit, so that $\phi$ intertwines co-units, and we may apply Theorem   \ref{th:banacheweski.Hopf} to conclude that $A$ and $B$ are either isomorphic or co-anti-isomorphic as Hopf algebras.
 \end{proof}
 
\section{When do Jordan maps of Hopf algebras intertwine antipodes?}
In this section, we give an interesting  result that allows us to further simplify the results of the previous section, and that is moreover of independent interest. 

A Jordan homomorphism is a not necessarily multiplicative linear map that has the property $\phi(a^2)=\phi(a)^2$ for all elements $a.$
It seems interesting to ask the following question: if an algebra map of  Hopf C*-algebras induces a Jordan map on the dual algebras, does it intertwine antipodes? This is motivated by the known fact that a bi-algebra map of Hopf algebras automatically intertwines antipodes. We give an affirmative answer in a special case.

We recall that in the setting of C*-algebras, it has been shown that a Jordan homomorphism is always a  direct sum of automorphisms and antiautomorphisms. (For matrix algebras, this was shown by Jacobson and Rickart\cite{JR2,JR1}. Kadison\cite{kadison.isometries,kadison2} extended these results for surjective maps onto C*-algebras and von Neumann algebras, and showed also that bijective order isomorphisms are Jordan isomorphisms. St\o rmer showed\cite{stormer1965} that Jordan homomorphisms of C*-algebras into C*-algebras are sums of homomorphisms and anti-homomorphisms.)

We recall, following \cite[pg.61]{abe}, that if $C$ is a co-algebra over $\C$ and $A$ is a unital algebra over $\C,$ the space of complex--linear functions from $C$ to $A$ can be made into a convolution algebra, denoted $\ConvolutionAlgebra(C,A).$ The product is defined by $(f\ConvolutionAlgebraProduct g)(c):= \sum f(c_1)g(c_2)$ where $\coproduct{} (c)=\sum c_1 \tensor c_2.$ There exists an identity element, namely $1_A\counit_C.$

The proof of the next Proposition is based on a technique from \cite[pg.152]{DNR}.
\begin{proposition} Let $A$ and $B$ be compact Hopf C*-algebras, with  tracial Haar states. Let $f\colon A\longrightarrow B$ be an algebra map, intertwining co-units. Let the induced map $f^*\colon \Bh\longrightarrow\Ah$ on the duals be Jordan. It then follows that $f$ intertwines antipodes.\label{prop:Hopf.Jordan.maps.preserve.antipodes}
\end{proposition}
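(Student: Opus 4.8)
The plan is to run the convolution‑algebra argument for antipodes (the technique of \cite[p.~152]{DNR}) on the dual side, applied to a homomorphic/anti‑homomorphic splitting of $f^{*}$. Put $g:=f^{*}\colon\Bh\arrow\Ah$. Since $f$ is an algebra map, $g$ is a co‑algebra map, $\coproduct{}\compose g=(g\tensor g)\compose\coproduct{}$; since $f$ is unital and intertwines co‑units, $g$ intertwines the co‑units $\counith$ of the two duals and $g(1_{\Bh})=1_{\Ah}$. Taking transposes (the pairings being non‑degenerate), the goal $f\compose\antipode_A=\antipode_B\compose f$ is equivalent to $g\compose\antipodeh_B=\antipodeh_A\compose g$, and it is this identity that I would establish.

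First, using only that $g$ is a co‑algebra map: the assignment $u\mapsto u\compose g$ is a unital homomorphism $\ConvolutionAlgebra(\Ah,X)\arrow\ConvolutionAlgebra(\Bh,X)$ for every unital (multiplier) algebra $X$. Applying it with $X=\Ah$ to the element $\Id_{\Ah}\in\ConvolutionAlgebra(\Ah,\Ah)$, which is convolution‑invertible with inverse $\antipodeh_A$, shows that $g$ is convolution‑invertible in $\ConvolutionAlgebra(\Bh,\Ah)$ with two‑sided inverse $\antipodeh_A\compose g$. Hence it suffices to show that $g\compose\antipodeh_B$ is a \emph{left} convolution inverse of $g$, for then it coincides with the two‑sided inverse $\antipodeh_A\compose g$.

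Now the Jordan hypothesis enters. Because $\Bh$ is a $c_0$‑direct sum of simple matrix blocks, restricting the Jordan map $g$ to each block and applying the Jacobson--Rickart dichotomy \cite{JR1,JR2} (on a simple algebra a Jordan homomorphism is a homomorphism or an anti‑homomorphism), $g$ is, on each block, a homomorphism or an anti‑homomorphism; moreover the supporting idempotents of distinct blocks are orthogonal (idempotents $p,q$ with $pq=-qp$ satisfy $pq=qp=0$), so the homomorphic and anti‑homomorphic blocks span complementary ideals and $g=g_1+g_2$ with $g_1$ multiplicative, $g_2$ anti‑multiplicative, and $g_1(\Bh)\,g_2(\Bh)=g_2(\Bh)\,g_1(\Bh)=0$. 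Then I would evaluate $(g\compose\antipodeh_B)\ConvolutionAlgebraProduct g$ at $y\in\Bh$, i.e.\ $\sum g(\antipodeh_B(y_1))\,g(y_2)$: substituting $g=g_1+g_2$, the two cross terms vanish by range‑orthogonality; the $g_1$‑term equals $g_1(\sum\antipodeh_B(y_1)y_2)=\counith_B(y)\,g_1(1_{\Bh})$ by the antipode axiom, and the $g_2$‑term equals $g_2(\sum y_2\,\antipodeh_B(y_1))=\counith_B(y)\,g_2(1_{\Bh})$ by the flipped antipode identity $\sum y_2\,\antipodeh_B(y_1)=\counith_B(y)1$, which is valid because $\antipodeh_B$ is involutive ($\antipode_B$ being involutive since the Haar state of $B$ is tracial, \cite[Prop.~4.8b]{BS}). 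Adding, and using $g_1(1_{\Bh})+g_2(1_{\Bh})=g(1_{\Bh})=1_{\Ah}$, gives $(g\compose\antipodeh_B)\ConvolutionAlgebraProduct g=1_{\Ah}\counith_B$, the unit of $\ConvolutionAlgebra(\Bh,\Ah)$. Hence $g\compose\antipodeh_B=\antipodeh_A\compose g$, and transposing back, $f$ intertwines antipodes.

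The step I expect to be the real obstacle is exactly this passage from ``$g$ Jordan'' to usable multiplicativity. The subtlety is that the splitting $g=g_1+g_2$ does \emph{not} respect the co‑algebra structure of $\Bh$ — the central idempotent separating $g_1$ from $g_2$ need not be co‑central — so one cannot decompose the Hopf algebra into two pieces and simply quote that bialgebra maps intertwine antipodes. The resolution is that no such compatibility is needed: in the single convolution computation that matters, the cross terms $g_1(\antipodeh_B(y_1))g_2(y_2)$ and $g_2(\antipodeh_B(y_1))g_1(y_2)$ are annihilated outright by range‑orthogonality, the multiplicative and anti‑multiplicative parts are then processed separately by the two forms of the antipode axiom, and their contributions $\counith_B(y)g_1(1_{\Bh})$ and $\counith_B(y)g_2(1_{\Bh})$ reassemble to $\counith_B(y)1_{\Ah}$. (Two routine caveats: $f$ must be taken unital, so that $g$ intertwines co‑units and $g(1_{\Bh})=1_{\Ah}$; and $\Ah,\Bh$ are to be read as multiplier Hopf algebras, in which the displayed antipode identities are the correct statements — equivalently, since all the maps involved are bounded, one may work throughout with the dense algebraic Hopf $*$‑subalgebras, whose pairings with the duals are non‑degenerate.)
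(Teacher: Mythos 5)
Your argument is correct, and at its core it is the same proof as the paper's --- the St\o rmer/Jacobson--Rickart splitting of the Jordan map into a multiplicative and an anti-multiplicative part with orthogonal ranges, fed into the convolution-algebra uniqueness-of-inverses trick, with involutivity of the antipode (traciality of the Haar state) handling the anti-multiplicative part --- but you execute it on the opposite side of the duality. The paper transports the central projection $P\in\Mult(\Ah)$ back through \cite[Theorem 2.3.1]{abe} to a decomposition $A=\S_1(A)+\S_2(A)$ into sub-co-algebras on which $f$ is co-multiplicative, resp.\ anti-co-multiplicative, and then computes in $\ConvolutionAlgebra(A,B)$; you stay in $\ConvolutionAlgebra(\Bh,\Ah)$ and instead kill the cross terms by range-orthogonality of $g_1=Pg$ and $g_2=(1-P)g$. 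Your route is arguably cleaner in two respects: it avoids invoking the duality between ideals of $\Ah$ and sub-co-algebras of $A$ altogether, and it obtains the two-sided convolution inverse $\antipodeh_A\compose g$ of $g$ in one line by pushing $\Id_{\Ah}\ConvolutionAlgebraProduct\antipodeh_A=1\counith_A$ forward along the co-algebra map $g$, where the paper verifies $f\ConvolutionAlgebraProduct(f\compose\antipode_A)=1_{\ConvolutionAlgebra}$ by direct computation. One misstatement to fix: the Jacobson--Rickart result for a simple matrix block mapping into a non-prime target is that the restriction is a \emph{sum} of a homomorphism and an anti-homomorphism with orthogonal ranges, not that it is one or the other (consider $x\mapsto(x,x^{t})$ into $M_2\oplus M_2$); the either/or dichotomy needs a prime target. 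This is harmless for you, since your computation only uses the decomposition $g=g_1+g_2$ with $g_1$ multiplicative, $g_2$ anti-multiplicative and $g_1(\Bh)g_2(\Bh)=g_2(\Bh)g_1(\Bh)=0$, which is exactly what St\o rmer's theorem (the paper's citation) supplies; you should cite that form rather than a blockwise dichotomy. Your closing caveats (unitality of $f$, and interpreting the Sweedler-type identities in the multiplier/dense-Hopf-$*$-subalgebra sense) are the right ones and are implicitly present in the paper's treatment as well.
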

\begin{proof}
The induced map on the duals, $f^*\colon \Bh\arrow\Ah,$ where $\Bh$ and $\Ah$ are $c_0$-direct sums of matrix algebras, is Jordan and thus has the property that there exists a multiplier projection $P\in\Mult(\Ah)$ such that $Pf^*\colon \Bh\arrow\Ah$ is multiplicative, and $(1-P)f^*\colon \Bh\arrow\Ah$ is anti-multiplicative. This is shown in \cite{stormer1965}, where in our case the special structure of the algebras gives multiplier projections rather than central projections in the enveloping Kac-von Neumann algebra.

The projection $P$ obtained above belongs to the  centroid of $\Ah,$ and thus $P\Ah$ and $(1-P)\Ah$ form a pair of complementary ideals. In particular, they are annihilators of ideals of $\Ah$, and by \cite[Theorem 2.3.1]{abe} 
they correspond under duality to sub-co-algebras of $A,$ denoted $\S_1 (A)$ and $\S_2 (A)$ respectively. Restricting the algebra map $f\colon A \longrightarrow B$ to $f\colon \S_1(A) \longrightarrow B$ we thus obtain a co-multiplicative map, and restricting to $f\colon \S_2(A) \longrightarrow B$ we obtain an anti-co-multiplicative map.  

Now consider the convolution algebra $\ConvolutionAlgebra(A,B).$ We shall show that $\antipode_B\compose f$ and $f\compose\antipode_A$ are equal as elements of this algebra, in which case it follows that $f$ intertwines antipodes as claimed.  Taking $w\in \S_1(A),$ and writing $\coproduct{}(w) = \sum w_1\tensor w_2,$ the co-multiplicativity of $f$ on $w$ gives
\begin{align*}
\left[(\antipode_B\compose f)\ConvolutionAlgebraProduct f\right] &= \sum \antipode_B (f(w_1))f(w_2)\\
							&= \counit_B (f(w))1_B\\
							&= \counit(w)1_B.\\\end{align*}
We recall that if the Haar state is tracial, the antipode is involutive \cite[Prop. 4.8b]{BS}, see also \cite{woronowicz}. Then the identity 
$\coproduct{}\compose\antipode=\sigma\compose(\antipode\tensor\antipode)\compose\coproduct{},$
where $\sigma$ is the flip, together with 
the fact that $f$ is anti-co-multiplicative on $\S_2(A)$ gives
\begin{align*} (f\tensor f)\coproduct{A} (w) &=\sigma\compose\coproduct{B} (f(w))\\
              &=(\antipode_B\tensor\antipode_B)\coproduct{B}(\antipode_B (f(w)),\end{align*}
							for all $w\in \S_2(A).$
But, then, writing $\coproduct{} (w) = \sum w_1\tensor w_2,$ we have
\begin{align*}
\left[(\antipode_B\compose f)\ConvolutionAlgebraProduct f\right] &= \sum \antipode_B (f(w_1))f(w_2)\\
							&= \counit_B (\antipode_B (f(w)))1_B\\
							&= \counit_B (f(w))1_B\\
							&= \counit_B (w)1_B.\\\end{align*}
Since $\S_1(A)$ and $\S_2(A)$ are linear subspaces of $A$ whose span is all of $A$, we have by linearity that $\left[(\antipode_B\compose f)\ConvolutionAlgebraProduct f\right](w)=\counit_B (w)$ for all $w$ in $A.$ 

We next consider $\left[f\ConvolutionAlgebraProduct (f\compose\antipode_A)\right](w).$
In this case, for all $w\in A$, writing $\coproduct{}(w)=\sum w_1 \tensor w_2,$ we have
\begin{align*}
\left[f\ConvolutionAlgebraProduct(f\compose\antipode_A)\right](w)&=\sum f(w_1)f(\antipode_A(w_2))\\
							&= f\left(\sum w_1\antipode_A(w_2)\right)\\
							&= f(\counit_B (w)1_A\\
							&= \counit_B (w)1_B,\\\end{align*}
where we have only used the fact that $f$ is unital, co-unit preserving, and multiplicative.
We thus conclude that in the convolution algebra $\ConvolutionAlgebra(A,B),$ we have
\begin{align*}f\ConvolutionAlgebraProduct (f\compose\antipode_A)=1_\ConvolutionAlgebra
\intertext{and}
(\antipode_B\compose f)\ConvolutionAlgebraProduct f =1_\ConvolutionAlgebra
\end{align*}
from which it follows by the associativity of the operation $\ConvolutionAlgebraProduct$ that $\antipode_B\compose f = f\compose\antipode_A,$ as claimed. 
\end{proof}

We now improve a result from \cite{kucerovsky.FD.hopf}.
\begin{theorem} Let $A$ and $B$ be finite-dimensional Hopf C*-algebras. Let $f\colon A\longrightarrow B$ be
a  C*-isomorphism that intertwines co-units. We suppose that the induced map on $K$-theory  intertwines the  products $\boxproduct_A$ and $\boxproduct_B$. Then $A$ and $B$ are isomorphic or co-anti-isomorphic as Hopf algebras.\label{th:banacheweski.fd.Hopf2}
\end{theorem}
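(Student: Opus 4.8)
The plan is to deduce this from Theorem~\ref{th:banacheweski.Hopf} by manufacturing the one hypothesis we are not handed --- that $f$ intertwines antipodes --- out of the results of the present section, Proposition~\ref{prop:Hopf.Jordan.maps.preserve.antipodes} in particular. What makes this feasible is that a finite-dimensional Hopf C*-algebra is automatically a Kac algebra: being semisimple as an algebra it has involutive antipode (by the Larson--Radford theorem over $\C$), and consequently its Haar state is tracial. Thus $A$ and $B$ qualify simultaneously as finite-dimensional discrete and compact Hopf C*-algebras with tracial Haar weights, so the ``tracial Haar'' hypotheses appearing in Theorem~\ref{th:banacheweski.Hopf} and in Proposition~\ref{prop:Hopf.Jordan.maps.preserve.antipodes} cost us nothing here.

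First I would show that the pullback $f^*\colon\Bh\arrow\Ah$ is a Jordan homomorphism. As in the proof of Proposition~\ref{prop:box.implies.jordan}, the Haar state of such an algebra coincides with the canonical trace of its GNS representation and so is fixed by the C*-algebra structure alone; hence the C*-isomorphism $f$, and equally $f\inv$, intertwines Haar states. Together with the hypothesis that the induced map on $K$-theory intertwines $\boxproduct_A$ and $\boxproduct_B$, so that $f$ and $f\inv$ are $\fancyK$-co-multiplicative, the argument of Proposition~\ref{prop:box.implies.jordan} now applies: one uses Lemma~\ref{lem:cotraces} on $f\inv$ to get that $f^*$ is multiplicative under cotracial functionals, then the spectrum-preservation argument (Stone--Weierstrass together with faithfulness of the Haar trace) to get positivity of $f^*$, then Proposition~\ref{prop:Wolff.positive.case} via tracial orthogonality. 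The antipode hypothesis in Proposition~\ref{prop:box.implies.jordan} plays no role in reaching this \emph{Jordan} conclusion --- it is needed only to upgrade to a Jordan *-homomorphism later --- and that is all we require at this stage.

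Knowing $f^*$ is Jordan, I would then invoke Proposition~\ref{prop:Hopf.Jordan.maps.preserve.antipodes}: $A$ and $B$ are compact Hopf C*-algebras with tracial Haar states, $f$ is an algebra map intertwining co-units, and $f^*$ is Jordan, so $f$ intertwines antipodes. At that point $f$ satisfies every hypothesis of Theorem~\ref{th:banacheweski.Hopf} --- a C*-isomorphism intertwining antipodes and co-units whose $K$-theory map intertwines the $\boxproduct$ products --- and that theorem delivers the conclusion: $A$ and $B$ are isomorphic or co-anti-isomorphic as Hopf algebras.

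The only non-routine point, as opposed to bookkeeping, is the potential circularity to be sidestepped: Theorem~\ref{th:banacheweski.Hopf} wants $f$ to intertwine antipodes, the natural tool for that (Proposition~\ref{prop:Hopf.Jordan.maps.preserve.antipodes}) wants $f^*$ Jordan, and the cleanest route to ``$f^*$ Jordan'' (Proposition~\ref{prop:box.implies.jordan}) is phrased with an antipode hypothesis. Checking that this hypothesis is inessential to the Jordan conclusion lets the implications be chained in the order (finite-dimensional $\Rightarrow$ tracial Haar) $\Rightarrow$ ($f^*$ Jordan) $\Rightarrow$ ($f$ intertwines antipodes) $\Rightarrow$ Theorem~\ref{th:banacheweski.Hopf}, with no loop.
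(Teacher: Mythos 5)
Your overall architecture --- establish that $f^*$ is Jordan, feed that into Proposition~\ref{prop:Hopf.Jordan.maps.preserve.antipodes} to obtain antipode intertwining, then conclude via Theorem~\ref{th:banacheweski.Hopf} (equivalently Lemma~\ref{lem:jordan.auto}) --- is the paper's. The gap is in your central claim that the antipode hypothesis of Proposition~\ref{prop:box.implies.jordan} ``plays no role in reaching the Jordan conclusion.'' It does. That proposition's proof passes through the assertion that $f^*$ maps self-adjoint elements to self-adjoint elements: this is what makes the functional-calculus expression $\tau(g(f^*(x)))$ meaningful in the Stone--Weierstrass step, and what licenses the inference ``spectrum in $[0,\infty)$ implies positive.'' But the involution on the dual algebra is built from the antipode (schematically $\omega^*=\overline{\omega(\antipode(\cdot)^*)}$), so the pullback of a C*-isomorphism preserves the dual involution precisely when the map intertwines antipodes --- the very property you are trying to derive; the paper itself makes this dependence explicit when it notes that antipode intertwining is what ensures the pullback is a Jordan \emph{*}-homomorphism. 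So the circularity you set out to sidestep is still present in your chain.

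The paper escapes by exploiting finite-dimensionality in a way your proposal does not: from $\tau(f^*(x)^n)=\tau(x^n)$ for all $n$ one reads off, via the classical Newton--Girard identities, that $x$ and $f^*(x)$ have the same eigenvalues for \emph{arbitrary} (not necessarily self-adjoint) $x$; applying the same to the inverse, $f^*$ is a bijective spectrum-preserving linear map of finite-dimensional C*-algebras, hence a Jordan map by Aupetit's theorem \cite[Th\'eor\`eme 1.12]{aupetit2}. No positivity, no preservation of self-adjointness, and no appeal to Proposition~\ref{prop:Wolff.positive.case} is required. If you replace your ``positivity plus disjointness-preserving'' route by this spectral argument, the remainder of your proof (Proposition~\ref{prop:Hopf.Jordan.maps.preserve.antipodes}, then Theorem~\ref{th:banacheweski.Hopf}) goes through as written.
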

 \begin{proof} Finite-dimensional Hopf C*-algebras are Kac algebras and thus have tracial Haar states. The Haar states extend to the canonical trace on the GNS representation, each matrix block being represented with multiplicity given by its matrix dimension. Since the Haar states are thus determined by C*-algebraic data, the C*-isomorphism $f$ must  intertwine the canonical traces, and hence intertwines the Haar states. 
Applying Lemma \ref{lem:cotraces} to $f\inv$, we have   
\begin{equation*}g(f^*(b_1)f^*(b_2)f^*(b_3) \cdots f^* (b_n))=g(f^*(b_1b_2b_3\cdots b_n))\label{eq:approx.mult2}\end{equation*}
for all co-tracial functionals $g$ and elements $b_i\in\Bh.$ We deduce from the above equation that
$$\tau(f^*(x)^n)=\tau(f^*(x^n)),$$
where $x$ is in the compact Hopf C*-algebra $\Bh,$ and $\tau$ is the tracial (and co-tracial) Haar state of the compact Hopf C*-algebra $\Ah.$ The map $f^*$ is unital and  linear. Since  $x\in\Bh$ is  a linear transformation on the finite-dimensional Hilbert space associated with the Haar state, it has a set of eigenvalues, $\Lambda.$  Similarly, denote the eigenvalues of $f^*(x)$ by $\Lambda'.$ 
Since $\tau(f^*(x)^n)=\tau(f^*(x^n))=\tau(x^n)$ for all $n,$ it follows that as a set, $\Lambda\isom\Lambda'.$ This is because  the sums of powers of the eigenvalues are equal, and so the classical Newton-Girard identities imply that the eigenvalues are equal up to  permutation.
 
Applying the same argument to the inverse map, we conclude that in fact $f^*$ preserves the spectrum of operators. A bijective spectrum-preserving map of finite-dimensional C*-algebras is necessarily a Jordan map\cite[Th\'eor\`eme 1.12]{aupetit2}. Applying Proposition 
\ref{prop:Hopf.Jordan.maps.preserve.antipodes}, we conclude that the algebra map $f\colon A\arrow B$ intertwines antipodes. It then follows that $f^*\colon \Bh\arrow\Ah$ intertwines the C*-involutions, ${}^*\colon\Ah\arrow\Ah$ and ${}^*\colon\Bh\arrow\Bh.$ Since $f^*\colon \Bh\arrow\Ah$ is therefore a Jordan *-isomorphism (and not just a Jordan isomorphism), it follows that  we can apply Lemma \ref{lem:jordan.auto} to conclude that $f^*$ is either multiplicative or anti-multiplicative, from which the conclusion follows.
\end{proof}

\section{On bi-inner automorphisms}

Bi-inner Hopf algebra *-automorphisms are the Hopf *-automor\-phisms of a Hopf C*-algebra that are inner as algebra automorphisms, both in the dual algebra and in the given algebra. In this section, we will improve the following result:
\begin{theorem}[\protect{\cite{kucerovsky.BiU}}] The bi-inner Hopf algebra *-automorphisms of a finite-dimensional Hopf C*-algebra $A$ are precisely the component of the identity of the set of maps of the form
$x\mapsto vxv^*$ where $u$ is a {\ksymmetric}al unitary that commutes with the co-commutative sub-algebra of $A.$\label{th:biU.fd1}\end{theorem}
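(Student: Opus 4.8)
The plan is to characterise bi-innerness by two ``block-preserving'' conditions, one imposed on $A$ and one on its dual, and then to translate these, together with the Hopf property, into conditions on an implementing unitary. First I would record a duality: an element $a\in A$ is co-commutative precisely when $\beta(a,\cdot\,)$ is a trace on $\widehat{A}$, and since traces on the multi-matrix algebra $\widehat{A}$ are in bijection with the elements of its centre, the pairing identifies $\CoZ(A)$ with the centre of $\widehat{A}$; moreover under this identification the action of $\widehat\alpha$ on the centre of $\widehat{A}$ is carried to the action of $\alpha$ on $\CoZ(A)$. Since a $*$-automorphism of a finite-dimensional C*-algebra is inner if and only if it fixes each matrix block, equivalently if and only if it acts trivially on the centre, applying this criterion to $\alpha$ on $A$ and to $\widehat\alpha$ on $\widehat{A}$ shows that a Hopf $*$-automorphism $\alpha$ of $A$ is bi-inner if and only if $\alpha$ fixes the centre $Z(A)$ pointwise and $\alpha$ fixes $\CoZ(A)$ pointwise.

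Next I would extract a normalised implementing unitary from a bi-inner Hopf $*$-automorphism $\alpha$. Write $\alpha=\Ad u$ with $u$ a unitary of $A$, possible since $\alpha$ is block-preserving on $A$. The condition that $\alpha$ fix $\CoZ(A)$ pointwise says exactly that $u$ commutes with $\CoZ(A)$. Since $\alpha$ intertwines the antipodes and $\antipode$ is an involutive $*$-anti-automorphism (the Haar states of a finite-dimensional Hopf C*-algebra being tracial), a short computation gives that $\Ad u$ commutes with $\antipode$ if and only if $u\,\antipode(u)$ is central, and that this central unitary $c:=u\,\antipode(u)$ is itself $\antipode$-fixed. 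Working one orbit at a time for the involution that $\antipode$ induces on the minimal central projections, one then produces a central unitary $z$ with $z\,\antipode(z)=c^{*}$; the unitary $v:=uz$ is \ksymmetric, still commutes with $\CoZ(A)$, and still implements $\alpha$. Hence every bi-inner Hopf $*$-automorphism has the form $x\mapsto vxv^{*}$ with $v$ a \ksymmetric unitary commuting with $\CoZ(A)$.

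For the converse, if $v$ is any \ksymmetric unitary commuting with $\CoZ(A)$ then $\Ad v$ is automatically a unital $*$-automorphism that fixes $Z(A)$ pointwise, preserves the co-unit, and --- because $v\,\antipode(v)=vv^{*}=1$ --- commutes with $\antipode$; by the criterion of the first paragraph it is then a bi-inner Hopf $*$-automorphism as soon as it is also a coalgebra map. This last requirement is where the identity component enters. The set of such unitaries is a compact group whose Lie algebra is $L=\{\,ih:\ h=h^{*},\ \antipode(h)=-h,\ [h,\CoZ(A)]=0\,\}$, and since coalgebra maps are closed under composition it suffices to prove that for $ih\in L$ the inner derivation $x\mapsto[ih,x]$ is a coalgebra derivation, equivalently that $\coproduct{}(h)-h\tensor 1-1\tensor h$ lies in the relative commutant of $\coproduct{}(A)$ inside $A\tensor A$. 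Granting this, $\Ad v$ is a bi-inner Hopf $*$-automorphism for every $v$ lying in the identity component, and combining this with the previous paragraph --- together with the connectedness of the group of bi-inner Hopf $*$-automorphisms, which lets one take the implementing unitary produced above in the identity component --- yields the stated description.

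I expect the coalgebra-derivation claim to be the main obstacle: one must see why $\antipode(h)=-h$ and $[h,\CoZ(A)]=0$ force $\coproduct{}(h)-h\tensor1-1\tensor h$ into $\coproduct{}(A)'$. The natural approach is dual: an inner $*$-derivation of $A$ is a coalgebra derivation exactly when its transpose is a derivation of $\widehat{A}$, which --- $\widehat{A}$ being multi-matrix --- is then inner, say implemented by some $\widehat h\in\widehat{A}$; the vanishing of $x\mapsto[ih,x]$ on $\CoZ(A)$, identified with the centre of $\widehat{A}$, is precisely what makes the transpose satisfy the Leibniz rule, while $\antipode(h)=-h$ controls the behaviour of $\widehat h$ under $\antipodeh$. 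Making this bookkeeping precise --- equivalently, identifying the Lie algebra of the bi-inner Hopf $*$-automorphism group with $\{\,x\mapsto[ih,x]:ih\in L\,\}$ --- is the technical heart of the argument, and verifying the connectedness of that group is the remaining point to settle.
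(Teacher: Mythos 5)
This statement is imported verbatim from \cite{kucerovsky.BiU} and the paper gives no proof of it here, so there is nothing internal to compare against; judging your argument on its own terms, it has a genuine gap exactly where you say it does. Your first two paragraphs are sound: the duality between co-commutative elements of $A$ and traces (hence central elements) of $\Ah$, the criterion ``bi-inner $\Leftrightarrow$ fixes $Z(A)$ and $\CoZ(A)$ pointwise,'' and the renormalization $v=uz$ by a central unitary to make the implementing unitary \ksymmetric are all correct and routine. But the entire content of the theorem is the converse inclusion --- that for $v$ in the identity component of the \ksymmetric unitaries commuting with $\CoZ(A)$, the algebra automorphism $\Ad v$ is actually \emph{comultiplicative} --- and this you do not prove. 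You reduce it to the claim that $\coproduct{}(h)-h\tensor 1-1\tensor h$ lies in $\coproduct{}(A)'$ for $ih$ in the Lie algebra $L$, and then explicitly defer that claim (``the technical heart of the argument''). Nothing in the hypotheses $\antipode(h)=-h$ and $[h,\CoZ(A)]=0$ obviously yields it; indeed the dual-derivation bookkeeping you sketch (the transpose of an inner derivation being a derivation of $\Ah$) is precisely the statement to be proved, not a reformulation that makes it easier. The forward direction also silently uses connectedness of the group of bi-inner Hopf $*$-automorphisms to place the implementing unitary in the identity component, and that connectedness is likewise unestablished; since the \ksymmetric elements form only a \emph{real} C*-algebra, their unitary group can be disconnected, so the identity-component condition is genuinely restrictive and cannot be waved through.

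It is worth noting that the machinery of this paper gives a complete and quite different route to the hard direction, which is how the subsequent theorem in the same section proceeds: an inner automorphism $\Ad v$ induces the identity on $K$-theory, hence trivially intertwines the products $\boxproduct$ and the co-unit states, so Theorem \ref{th:banacheweski.fd.Hopf2} (via the Jordan-map and spectrum-preservation arguments of Propositions \ref{prop:box.implies.jordan} and \ref{prop:Hopf.Jordan.maps.preserve.antipodes}) shows $\Ad v$ is either a Hopf $*$-automorphism or a co-anti-automorphism, and connectedness to the identity rules out the co-anti case. That argument bypasses your coalgebra-derivation claim entirely; if you want to complete your Lie-algebraic approach instead, the unproved claim about $\coproduct{}(h)-h\tensor1-1\tensor h$ is the point you must supply.
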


Proposition \ref{prop:Hopf.Jordan.maps.preserve.antipodes} of the previous section will in effect allow us to remove the condition in the above that the unitaries be in the algebra of {\ksymmetric}al elements (at least in the  finite-dimensional case). 

\begin{theorem}Consider a finite-dimensional Hopf C*-algebra $A.$ The following are equivalent:
\begin{enumerate}\item  The map $\alpha\colon A\arrow A$ is a bi-inner Hopf *-automorphism,
\item The map $\alpha$ can be written as $x\mapsto U^* x U,$ where $U$ is 
a unitary in the unitization of $A$ that commutes with co-central elements of $A.$ 
\end{enumerate}\end{theorem}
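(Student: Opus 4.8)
The direction (ii) $\Rightarrow$ (i) is the easy one and should be dispatched first: if $U$ is a unitary in the unitization commuting with the co-central elements, then $\alpha = \Ad U^*$ is obviously an inner algebra automorphism, and its pullback $\alpha^*$ on $\Ah$ is implemented by the image of $U$ under the canonical embedding, hence is also inner — so $\alpha$ is bi-inner in the algebra sense; one then checks that commuting with the co-centre is exactly the condition making $\alpha$ a co-algebra map (this is the standard computation already implicit in Theorem \ref{th:biU.fd1}), and uniqueness of the antipode upgrades a bi-algebra automorphism to a Hopf automorphism. The substance is all in (i) $\Rightarrow$ (ii), and the point is to weaken the hypothesis of Theorem \ref{th:biU.fd1}, which a priori requires the implementing unitary to be {\ksymmetric}al.

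First I would take a bi-inner Hopf *-automorphism $\alpha$ and write $\alpha = \Ad U^*$ with $U$ a unitary in the unitization of $A$; the bi-innerness gives likewise $\alpha^* = \Ad \widehat{U}^*$ for a unitary $\widehat{U}$ in the unitization of $\Ah$. The dual map $\alpha^*$ is then in particular a Jordan *-automorphism of $\Ah$ (it is an honest *-automorphism), and $\alpha$, being a Hopf map, intertwines co-units. Hence the hypotheses of Proposition \ref{prop:Hopf.Jordan.maps.preserve.antipodes} are met, so $\alpha$ intertwines antipodes — but we already knew that. The real use of the machinery is the other way: I want to \emph{replace} $U$ by a {\ksymmetric}al unitary implementing the same automorphism, so that Theorem \ref{th:biU.fd1} applies verbatim.

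So the key step is: given that $\alpha = \Ad U^*$ is a Hopf *-automorphism, produce a {\ksymmetric}al unitary $U'$ with $\Ad U'^* = \alpha$. Because $\alpha$ commutes with $\antipode\compose *$ (the composite of antipode and involution, which is an algebra automorphism here since $\antipode$ is involutive in the Kac case), the unitary $(\antipode\compose *)(U)$ also implements $\alpha$, so $(\antipode\compose *)(U) = \lambda U$ for a scalar $\lambda$ of modulus one (using that $A$ has trivial centre modulo the usual block-scalar ambiguity — more precisely, $U$ is determined up to a central unitary, and on each matrix block this is a phase; one argues the phases can be chosen consistently, or absorbs them using that $(\antipode\compose *)^2 = \Id$ forces $|\lambda|=1$ and $\lambda\bar\lambda = 1$). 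Then $U' := \mu U$ for a suitable square root $\mu$ of $\bar\lambda$ is fixed by $\antipode\compose *$, i.e.\ {\ksymmetric}al, and still implements $\alpha$. Finally, since $\alpha$ is a co-algebra map, $U'$ must commute with the co-commutative subalgebra (this is the content of the computation in \cite{kucerovsky.BiU} identifying the co-algebra condition on $\Ad U'^*$ with centrality of $U'$ in the co-centre), and we are exactly in the situation of Theorem \ref{th:biU.fd1}, which gives (ii). Conversely, the component-of-the-identity subtlety in Theorem \ref{th:biU.fd1} disappears here because we are not claiming a connectedness statement, only the pointwise description.

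**Main obstacle.** The delicate point is the phase bookkeeping: the implementing unitary $U$ for an inner automorphism of a finite direct sum of matrix algebras is unique only up to a unitary in the centre (a choice of phase on each block), and one must check that these phases can be pinned down so that $U$ becomes genuinely {\ksymmetric}al \emph{and} simultaneously remains commuting with the co-centre — i.e.\ that the two normalizations are compatible. I expect this to follow because $\antipode\compose *$ and the co-centre interact correctly (the co-centre is $\antipode\compose *$-invariant), but making it airtight, and handling the case where $A$ is non-unital so that $U$ lives in the unitization, is where the work is. Everything else is either a direct appeal to Theorem \ref{th:biU.fd1} and Proposition \ref{prop:Hopf.Jordan.maps.preserve.antipodes}, or the routine verification that $\Ad U^*$ is a co-algebra map iff $U$ centralizes the co-centre.
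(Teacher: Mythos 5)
You have the two directions the wrong way round, and the direction you dismiss as easy is the one that carries all the content. The implication (i) $\Rightarrow$ (ii) is essentially immediate: since $\alpha$ is bi-inner, its dual action is an inner automorphism of $\Ah$ and therefore fixes the centre of $\Ah$ pointwise; by duality $\alpha$ fixes the co-centre of $A$ pointwise, and $U^{*}zU=z$ for every co-central $z$ says exactly that $U$ commutes with the co-centre. (Alternatively, Theorem \ref{th:biU.fd1} already hands you an implementing unitary that is \ksymmetric and commutes with the co-commutative subalgebra, and statement (ii) asks for strictly less; so your phase-bookkeeping programme for symmetrizing $U$ under the antipode composed with the involution solves a problem that does not need solving. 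It would be needed only to feed the unitary back into Theorem \ref{th:biU.fd1} in the converse direction --- but that theorem also demands membership in the component of the identity, which is precisely the hypothesis the present theorem removes, so ``applies verbatim'' is not available to you.)

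The genuine gap is in (ii) $\Rightarrow$ (i). Two of your claims there do not survive scrutiny. First, ``its pullback $\alpha^{*}$ on $\Ah$ is implemented by the image of $U$ under the canonical embedding, hence is also inner'': no such embedding does this job, and more fundamentally the dual of an inner algebra automorphism of $A$ is a priori only a co-algebra map of $\Ah$; it is an algebra automorphism of $\Ah$ (a precondition for being inner there) if and only if $\alpha$ is co-multiplicative --- which is exactly what remains to be proved, so the argument is circular. Second, ``commuting with the co-centre is exactly the condition making $\alpha$ a co-algebra map'' is not a standard computation; commuting with the co-centre is, on its face, a far weaker condition than co-multiplicativity. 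Deriving (anti-)co-multiplicativity from it is the substance of the paper: one checks that commuting with the co-centre makes the induced map on $K$-theory intertwine the products $\boxproduct$, and then invokes Theorem \ref{th:banacheweski.fd.Hopf2}, whose proof runs through the Fourier transform, the trace identities $\tau(f^{*}(x)^{n})=\tau(f^{*}(x^{n}))$, the Newton--Girard identities, Aupetit's spectrum-preservation theorem, Proposition \ref{prop:Hopf.Jordan.maps.preserve.antipodes} and Lemma \ref{lem:jordan.auto}. None of that machinery is optional, and your proposal replaces all of it with an assertion; it also leaves the possible co-anti-automorphism outcome of that theorem unaddressed.
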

\begin{proof} If we are given a bi-inner Hopf *-automorphism, being inner, it must act on $A$ by some unitary $U$ in  $A,$ and it must moreover fix elements of the co-centre. From this it follows that it can be written $x\mapsto U^* x U,$ where $U$ is 
a unitary  that commutes with co-central elements of $A.$ 
Conversely, if we are given a unitary $U$ in  $A$ that commutes with co-central elements of $A,$  applying Theorem \ref{th:banacheweski.fd.Hopf2}  to the map $\alpha\colon x\mapsto U^* x U$ gives the required bi-inner automorphism. 
\end{proof}

We also point out that Theorem \ref{th:banacheweski.Hopf} has a corollary:

\begin{theorem} Consider a discrete Hopf C*-algebra $A$  with  tracial Haar state. Let $v\in A$ be a {\ksymmetric}al unitary that commutes with elements of the co-centre. Suppose also that $v$ is connected to the identity by a path of elements having the same properties. Then, the map $x\mapsto v^*xv$ is a Hopf *-automorphism.
\end{theorem}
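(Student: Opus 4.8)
The plan is to verify the hypotheses of Theorem \ref{th:banacheweski.Hopf} for the inner map $\alpha\colon x\mapsto v^*xv$, and then invoke that theorem together with the fact that an inner *-automorphism cannot be a proper co-anti-isomorphism unless it is also an isomorphism. First I would observe that $\alpha$ is a C*-algebra *-automorphism of $A$ (conjugation by a unitary in the multiplier algebra), so in particular it intertwines the canonical traces coming from the GNS representation, hence intertwines the tracial Haar states. Next I must check that $\alpha$ intertwines antipodes: since $v$ is {\ksymmetric}al, meaning $\antipode(v)=v^*$ (the antipode composed with the involution fixes $v$, so $\antipode(v^*)=v$ and $\antipode(v)=v^*$), and the antipode is an anti-homomorphism, one computes $\antipode(v^*xv)=\antipode(v)\antipode(x)\antipode(v^*)=v^*\antipode(x)v=\alpha(\antipode(x))$, giving $\antipode\compose\alpha=\alpha\compose\antipode$. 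For the co-unit: $\counit$ is a character, $\counit(v^*xv)=\counit(v^*)\counit(x)\counit(v)=|\counit(v)|^2\counit(x)$, and since $\counit(v)$ has modulus $1$ (because $\counit(v)\counit(v^*)=\counit(vv^*)=\counit(1)=1$ and $\counit(v^*)=\overline{\counit(v)}$), we get $\counit\compose\alpha=\counit$.

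The main step is to show that the induced map on $K$-theory intertwines the product $\boxproduct$. Here I would use the pullback property of $\boxproduct$ from the Definition: for $a\in A$ with $\coproduct{}(a)=\sum a_1\tensor a_2$, one has $a(b\boxproduct c)=\sum(ba_1)\boxproduct(ca_2)$, and one also needs the right-handed analogue $(b\boxproduct c)a=\sum(ba_1)\boxproduct(ca_2)$ or the relevant bimodule identity, so that conjugation $b\mapsto v^*bv$ on $A\tensor\compact$ — more precisely the map $(v^*\tensor 1)\,\cdot\,(v\tensor 1)$ — interacts with $\boxproduct$ via $\coproduct{}(v)$. Because $v$ commutes with the co-centre, $\coproduct{}(v)$ is co-commutative-symmetric in the appropriate sense; combined with $v^*v=1$ and the normalization property (which is automatically respected since $\alpha$ preserves the Haar state and the standard trace on $\compact$), this forces $\alpha_*(p)\boxproduct\alpha_*(q)=(\alpha_*\tensor\Id)(p\boxproduct q)$ at the level of $K$-theory generators. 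I would phrase this dually through Proposition \ref{prop:box.and.convolve}: it suffices to check $(\sigma\tensor t)(\alpha(p)\boxproduct\alpha(q))=\sigma(\alpha(p)\convolve\alpha(q))=\sigma(\alpha(p\convolve q))$ for all traces $\sigma$, and since $\convolve$ is defined via the Fourier transform and $\alpha$ intertwines Haar states, Lemma \ref{lem:pullbacks.and.Fourier} reduces this to the statement that $\alpha^*$ (conjugation on the dual by some unitary determined by $v$ and the pairing) intertwines the dual product, which holds because $v$ being co-central-commuting means the dual conjugation is by a unitary in the dual that is suitably central.

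Once the three hypotheses of Theorem \ref{th:banacheweski.Hopf} are in place, that theorem yields that $\alpha$ is either a Hopf algebra isomorphism or a Hopf algebra co-anti-isomorphism of $A$ with itself. To rule out the genuinely co-anti case (or rather, to conclude we land on the isomorphism alternative), I would use the path hypothesis: $v$ is connected to $1$ through {\ksymmetric}al unitaries commuting with the co-centre, so $\alpha$ is connected through maps satisfying all the same hypotheses to the identity map, which is a Hopf isomorphism; since being an isomorphism versus a co-anti-isomorphism is a discrete alternative (the dual map is multiplicative or anti-multiplicative, and these are distinguished by a continuous invariant along the path — e.g. the sign appearing in $\coproduct{}\compose\alpha^*$ versus $\sigma\compose\coproduct{}\compose\alpha^*$), the whole path stays in the isomorphism component. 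Hence $\alpha$ is a Hopf *-automorphism. The main obstacle I anticipate is the $K$-co-multiplicativity verification — specifically, making precise how $\coproduct{}(v)$ acts on $\boxproduct$ and why co-centrality of $v$ is exactly what is needed; I expect this to come down to the identity $\coproduct{}(v)(1\tensor v^*)=\sigma(\coproduct{}(v)(1\tensor v^*))$ or a similar symmetry, which should follow from $v$ commuting with co-commutative elements once one expands in matrix blocks.
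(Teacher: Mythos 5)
Your proposal follows the same route as the paper: the paper's entire proof is the two sentences ``Theorem \ref{th:banacheweski.Hopf} shows that a map of the given form is either a Hopf *-automorphism or a Hopf *-co-anti-automorphism. However, the fact that the map is connected to the identity map rules out the case of a co-anti-automorphism.'' Your closing step (the path through admissible unitaries forces the isomorphism alternative) is exactly the paper's argument, and the hypothesis checks you add for Theorem \ref{th:banacheweski.Hopf} --- that $\Ad(v)$ preserves the tracial Haar state, that $\antipode(v)=v^*$ for a \ksymmetric unitary so that $\antipode\compose\alpha=\alpha\compose\antipode$, and that $\counit\compose\alpha=\counit$ because $\counit$ is a character and $|\counit(v)|=1$ --- are correct details that the paper leaves implicit.

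The one place where your argument is only a sketch is the $K$-co-multiplicativity of $\alpha$, and you are right to flag it as the real obstacle; note that it is not automatic from $[\alpha(p)]=[p]$ in $K_0$, since by Proposition \ref{prop:box.and.convolve} the quantity $(\sigma\tensor t)(\ell_1\boxproduct\ell_2)=\sigma(\ell_1\convolve\ell_2)$ depends on the elements $\ell_i$ and not only on their $K$-theory classes. The clean reduction is: writing $\sigma=\tau(z\cdot)$ with $z$ central and finitely supported, Proposition \ref{prop:box.and.convolve} and the computation in its proof turn the required identity $\sigma(\alpha(p)\convolve\alpha(q))=\sigma(\alpha(p\convolve q))$ into
$(\tau\tensor\tau)\bigl(\coproduct{}(z)(v^*\tensor v^*)(p\tensor q)(v\tensor v)\bigr)=(\tau\tensor\tau)\bigl(\coproduct{}(z)(p\tensor q)\bigr)$,
which by traciality and faithfulness of $\tau\tensor\tau$ amounts to $\coproduct{}(z)$ commuting with $v\tensor v$ for every central $z$. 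This is precisely where the hypothesis that $v$ commutes with the co-centre must be used (via the Fourier correspondence between central elements of $A$, which parametrize the traces, and co-commutative elements); your phrase ``the dual conjugation is by a unitary in the dual that is suitably central'' gestures at this but is not yet an argument --- the transpose of an inner automorphism is not conjugation by a unitary of the dual algebra in general. To be fair, the paper does not supply this verification either (it is implicitly delegated to \cite{kucerovsky.BiU}), so your proposal is no less complete than the source; but if you want a self-contained proof, the displayed commutation identity is the statement you must actually derive from co-centrality.
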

\begin{proof} Theorem \ref{th:banacheweski.Hopf} shows that a map of the given form is either a Hopf *-automorphism or a Hopf *-co-anti-automorphism. However, the fact that the map is connected to the identity map rules out the case of a co-anti-automorphism.
\end{proof} 

\section{A detailed example: AF Hopf C*-algebras and quantum UHF algebras}
We
consider AF C*-algebras whose building blocks and connecting maps have Hopf structure. Thus the building blocks, being finite dimensional, are necessarily Kac algebras, and the C*-algebraic inductive limit is a C*-algebra with Hopf structure. The connecting maps need only be assumed to be bi-algebra maps, since bi-algebra maps of Hopf algebras are necessarily Hopf algebra maps \cite[pg.152]{DNR}.
We combine  our previous results with the classification theory for C*-algebras.

Proposition \ref{prop:Hopf.Jordan.maps.preserve.antipodes} gives a Corollary for AF Hopf C*-algebras:
\begin{corollary} Let $A$ and $B$ be  duals of compact AF Hopf C*-algebras. Let $f\colon A\longrightarrow B$ be an algebra isomorphism, intertwining co-units. Let the induced map $f^*\colon \Bh\longrightarrow\Ah$  be Jordan. It then follows that $f$ intertwines antipodes.\label{prop:Hopf.Jordan.maps.preserve.antipodes.AF}
\end{corollary}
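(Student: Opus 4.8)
The plan is to reduce the AF case directly to Proposition \ref{prop:Hopf.Jordan.maps.preserve.antipodes}, which was stated for compact Hopf C*-algebras with tracial Haar states. The key observation is that a compact AF Hopf C*-algebra is an inductive limit of finite-dimensional Hopf C*-algebras, and each of these finite-dimensional blocks is a Kac algebra, hence has a tracial Haar state; moreover the Haar state on the limit is the limit of the block Haar states, so it too is tracial. Thus the only genuine gap between the Corollary and the Proposition is that Proposition \ref{prop:Hopf.Jordan.maps.preserve.antipodes} was phrased for the \emph{compact} algebra itself, whereas here we are given the \emph{dual} $A$ of a compact AF Hopf C*-algebra, and it is the map on the dual ($f^*\colon\Bh\arrow\Ah$, i.e.\ the map on the compact algebras) that is Jordan.

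First I would note that, writing $\Ah$ and $\Bh$ for the compact AF Hopf C*-algebras dual to $A$ and $B$, the hypothesis gives us a Jordan algebra isomorphism $f^*\colon\Bh\arrow\Ah$; its own dual is, up to the usual identifications, the original map $f\colon A\arrow B$. Applying Proposition \ref{prop:Hopf.Jordan.maps.preserve.antipodes} with the roles played by $\Bh,\Ah$ in place of $A,B$ — which is legitimate since $\Bh$ and $\Ah$ are compact Hopf C*-algebras with tracial Haar states, and $f^*$ is an algebra map intertwining co-units whose induced map on the duals (namely $f$) is Jordan — yields that $f^*$ intertwines the antipodes $\antipode_{\Bh}$ and $\antipode_{\Ah}$. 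By duality of antipodes this is exactly the statement that $f\colon A\arrow B$ intertwines $\antipode_A$ and $\antipode_B$, which is the desired conclusion.

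The one point requiring a little care, and the place I would expect the only real work, is checking that the hypotheses of Proposition \ref{prop:Hopf.Jordan.maps.preserve.antipodes} are genuinely met after this transposition: that $f^*$ intertwines co-units (which is dual to $f$ intertwining co-units, our hypothesis, via $\counit\circ f^* = \counit\circ$ transpose relations), that $f^*$ is an \emph{algebra} map (it is, since $f$ is an algebra isomorphism so its transpose is a coalgebra map whose further dual... — more carefully: the Jordan hypothesis on $f^*$ is what is assumed, but Proposition \ref{prop:Hopf.Jordan.maps.preserve.antipodes} wants $f^*$ to be an algebra map with \emph{its} dual Jordan; here the roles are clean because $f$ is genuinely multiplicative), and that the Haar states are tracial. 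The traciality is the AF argument sketched above: pass to the inductive system of finite-dimensional Kac subalgebras, use that each has a unique tracial Haar state, and that these are compatible with the connecting maps (which are bi-algebra, hence Hopf algebra, maps), so the Haar state of the limit restricts to them and is therefore tracial.

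Once these identifications are in place the Corollary is immediate. So the proof is essentially: record that compact AF Hopf C*-algebras have tracial Haar states (via the Kac-algebra structure of the finite-dimensional building blocks), observe that the data of the Corollary is precisely the data of Proposition \ref{prop:Hopf.Jordan.maps.preserve.antipodes} read on the dual (compact) side, and invoke that Proposition together with uniqueness/duality of the antipode. No new estimates or constructions are needed beyond what is already available in the earlier sections.
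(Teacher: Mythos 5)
There is a genuine gap, and it sits exactly at the point you flag as ``requiring a little care.'' Proposition \ref{prop:Hopf.Jordan.maps.preserve.antipodes} asks for a map that is \emph{multiplicative} on the compact side and whose induced map on the discrete duals is Jordan. In the Corollary the multiplicative map is $f\colon A\arrow B$ on the \emph{discrete} side, and the map that is merely Jordan is $f^*\colon\Bh\arrow\Ah$ on the \emph{compact} side. Your role swap would need $f^*$ to be an algebra map, but that is not a hypothesis: the transpose of the multiplicative map $f$ is a \emph{co}-multiplicative map, not a multiplicative one, and ``Jordan'' does not upgrade to ``multiplicative.'' Indeed, if $f^*$ were known to be multiplicative as well as co-multiplicative, then $f$ would be a bi-algebra isomorphism and the conclusion would be immediate from the standard fact (already quoted in the paper) that bi-algebra maps of Hopf algebras intertwine antipodes --- the whole content of the statement is precisely that one only assumes the Jordan property on one side. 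Your parenthetical attempt to argue that $f^*$ is an algebra map trails off without resolving this, and it cannot be resolved: the Proposition's hypotheses are not self-dual, so it cannot simply be ``read on the dual side.''

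Nor can one apply the Proposition verbatim to $f\colon A\arrow B$, because its proof uses in an essential way that the duals are $c_0$-direct sums of matrix algebras: St\o rmer's decomposition of the Jordan map then yields a \emph{multiplier projection} splitting $\Ah$ into complementary ideals, which correspond under \cite[Theorem 2.3.1]{abe} to sub-co-algebras $\S_1(A)$, $\S_2(A)$ on which $f$ is co-multiplicative and anti-co-multiplicative respectively. When $\Ah$ and $\Bh$ are compact AF algebras (for instance simple, as in the quantum UHF case), no such multiplier projection need exist, and the ideal/sub-co-algebra correspondence degenerates. The missing idea, which is how the paper proceeds, is to restrict $f^*$ to a finite-dimensional building block $D\subset\Bh$; the restricted Jordan map $f^*\colon D\arrow\Ah$ is dual to a map $A\arrow\widehat{D}\subseteq B$ into a finite-dimensional ideal, the finite-dimensional setting restores the central projections needed to run the argument of Proposition \ref{prop:Hopf.Jordan.maps.preserve.antipodes}, and one then exhausts $\Bh$ by larger and larger building blocks. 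Your observation that the Haar states are tracial (via the Kac-algebra building blocks) is correct but does not address this structural obstruction.
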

\begin{proof}
Consider the induced map on the duals, restricted to a building block $D\subset \Bh.$  Then $f^*\colon D\arrow\Ah$ is an injective Jordan map. The restricted map $f^*\colon D\arrow\Ah$  is dual to a map $g\colon A\arrow \widehat{D}\subseteq B,$ where $\widehat{D}$ is  (by \cite[Theorem 2.3.1]{abe})  a finite-dimensional annihilator of an ideal, thus an ideal, in $B.$
Proposition \ref{prop:Hopf.Jordan.maps.preserve.antipodes} shows that the map $g\colon A\arrow\widehat{D}\subseteq B$ intertwines antipodes.   Choosing larger and large building blocks, we conclude that the given map $f\colon A\arrow B$ intertwines antipodes.
\end{proof}
This Corollary allows us to weaken the hypotheses of Theorem \ref{th:banacheweski.Hopf}, at least in the case of AF Hopf C*-algebras. We recall that at the C*-algebraic level, there exists a functorial classification of AF-algebras, given by an ordered $K$-theory group\cite{elliott}. By the term co-unit state, we denote the map on $K$-theory induced by the co-unit.
\begin{theorem} Let $A$ and $B$ be duals of compact AF Hopf C*-algebras. Let $f_{*}\colon K(A)\longrightarrow K(B)$ be
an isomorphism at the level of ordered $K$-theory that intertwines  the co-unit states and the  products $\boxproduct_A$ and $\boxproduct_B$. Then $A$ and $B$ are isomorphic or co-anti-isomorphic as Hopf algebras.\label{th:banacheweski.Hopf.AF.Hopf}\label{th:iso.for.discrete.AF}
\end{theorem}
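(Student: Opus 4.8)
The plan is to upgrade the given ordered $K$-theory isomorphism to a C*-algebra isomorphism by the Elliott classification of AF algebras, and then feed that C*-isomorphism into the machinery already built in the previous sections. First I would invoke the functorial classification of AF C*-algebras: since $f_*\colon K(A)\arrow K(B)$ is an isomorphism of ordered $K$-theory groups (here $A$ and $B$ are AF, being C*-inductive limits of their finite-dimensional building blocks), there exists a C*-algebra isomorphism $f\colon A\arrow B$ inducing $f_*$. Because $f_*$ intertwines the co-unit states — i.e. the $K$-theory classes of the support projections of the co-units correspond — one arranges (up to the usual approximate-unitary-equivalence freedom in Elliott's theorem, and using that the co-unit is determined by that support projection) that $f$ intertwines the co-units $\counit_A$ and $\counit_B$. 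Likewise, $f$ can be taken so that its induced map on $K$-theory is the given $f_*$, hence intertwines the products $\boxproduct_A$ and $\boxproduct_B$ by hypothesis.

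Next I would produce the Jordan property on the dual. Here one cannot directly quote Proposition \ref{prop:box.implies.jordan}, since that proposition already assumes $f$ intertwines antipodes — which is exactly what we are trying to avoid assuming. Instead, the argument of Proposition \ref{prop:box.implies.jordan} that uses only $\fancyK$-co-multiplicativity and co-unit intertwining goes through to give equation \eqref{eq:approx.mult}, hence $\tau(f^*(x)^n) = \tau(f^*(x^n))$ for all self-adjoint $x$ in a building block $D\subset\Bh$, where $\tau$ is the tracial Haar state. Working block by block — each $D$ finite-dimensional — the Newton–Girard identities (as in the proof of Theorem \ref{th:banacheweski.fd.Hopf2}) force $f^*$ restricted to $D$ to be spectrum-preserving, hence (by \cite[Th\'eor\`eme 1.12]{aupetit2}) a Jordan map on $D$. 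Passing to larger and larger building blocks, $f^*\colon\Bh\arrow\Ah$ is Jordan.

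Now Corollary \ref{prop:Hopf.Jordan.maps.preserve.antipodes.AF} applies directly: since $f\colon A\arrow B$ is an algebra isomorphism of duals of compact AF Hopf C*-algebras intertwining co-units, and $f^*$ is Jordan, it follows that $f$ intertwines antipodes. Consequently $f^*$ intertwines the C*-involutions, so $f^*$ is a Jordan *-isomorphism; by Lemma \ref{lem:jordan.auto} (its ``discrete replaced by compact'' clause, applied to $\Ah,\Bh$) $f^*$ is either multiplicative or anti-multiplicative. Dualizing, $f$ is either a bi-algebra isomorphism or a bi-algebra co-anti-isomorphism, and by uniqueness of the antipode it is a Hopf algebra (co-anti)isomorphism, as claimed.

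The main obstacle I expect is the bookkeeping at the Elliott-classification step: the classification gives a C*-isomorphism realizing $f_*$ only up to approximate unitary equivalence and up to the scaling/unit conventions appropriate to these (possibly non-unital, $c_0$-direct-sum) algebras, so one must check carefully that the intertwining of co-unit states can genuinely be promoted to intertwining of the co-unit homomorphisms themselves (the co-unit being detected by a single projection, its support projection, makes this plausible but it needs to be spelled out), and that the resulting $f$ indeed has $f_*$ — not merely something equivalent to it — as its $K$-theory map so that the $\boxproduct$-intertwining hypothesis is literally available. Everything after that is a reassembly of results already proved.
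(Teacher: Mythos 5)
Your proposal follows the same overall route as the paper's proof: lift $f_{*}$ to a C*-isomorphism $f\colon A\arrow B$ by Elliott's classification, establish that the dual map $f^{*}\colon\Bh\arrow\Ah$ is Jordan, invoke Corollary \ref{prop:Hopf.Jordan.maps.preserve.antipodes.AF} to obtain antipode intertwining, and finish with Lemma \ref{lem:jordan.auto}. The one genuine divergence is at the Jordan step, and it is in your favour. The paper's proof obtains the Jordan property by citing a prior result (the citation as printed points to Theorem \ref{th:banacheweski.Hopf}, evidently a label slip for Proposition \ref{prop:box.implies.jordan}); but Proposition \ref{prop:box.implies.jordan} assumes that $f$ intertwines antipodes --- a hypothesis its proof actually uses, since knowing that $f^{*}$ sends self-adjoints to self-adjoints rests on the fact that the involution on the dual is built from the antipode --- and antipode intertwining is exactly what the subsequent appeal to Corollary \ref{prop:Hopf.Jordan.maps.preserve.antipodes.AF} is supposed to deliver. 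You correctly decline to quote that proposition and instead extract the Jordan property from the trace-power identity $\tau(f^{*}(x)^n)=\tau(f^{*}(x^n))$ and spectrum preservation, as the paper itself does in Theorem \ref{th:banacheweski.fd.Hopf2} and Proposition \ref{prop:box.implies.jordan.for.AF}; this removes the circularity. One caveat you share with the paper: for $x$ in a finite-dimensional building block $D\subset\Bh$, the image $f^{*}(x)$ lies in the infinite-dimensional algebra $\Ah$, so Newton--Girard does not apply verbatim to $f^{*}(x)$, and \cite[Th\'eor\`eme 1.12]{aupetit2} concerns maps between finite-dimensional algebras; one should instead run the approximation argument of Proposition \ref{prop:box.implies.jordan.for.AF} and appeal to \cite[Theorem 3.7]{aupetit3} for the full map $f^{*}$. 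Your bookkeeping remarks on the Elliott step (promoting equality of co-unit states to intertwining of the co-unit homomorphisms via support projections, and arranging that $f$ induces $f_{*}$ on the nose so the $\boxproduct$-hypothesis is literally available) address details the paper leaves implicit and are worth spelling out.
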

\begin{proof} The above-mentioned classification theory of AF algebras gives an algebra *-isomorphism $f\colon A\arrow B.$ Proposition \ref{th:banacheweski.Hopf} shows that the dual $f^{*}$ of this map has the Jordan property, and Corollary \ref{prop:Hopf.Jordan.maps.preserve.antipodes.AF} shows that the map  $f\colon A\arrow B$ intertwines antipodes. Thus the dual map $f^{*}$ intertwines the C*-involutions. Lemma   \ref{lem:jordan.auto} then provides the required (co-anti)isomorphism.
\end{proof}
A concrete example of an interesting Hopf AF C*-algebra is given by choosing the building block $A_n$ to be the $n$-fold tensor product of the Kac-Palyutkin algebra\cite{KP1964} with itself. This algebra is an eight-dimensional  Kac algebra (or ring group) that is neither commutative nor co-commutative. Taking the C*-inductive limit we obtain what we term the quantum UHF C*-algebra associated with the Kac-Palyutkin algebra. A similar  construction can be made beginning with any finite-dimensional Kac algebra. 

Replacing the use of classification theory of AF C*-algebras in this section by some other $K$-theoretic classification of C*-algebras would lead to additional examples.

\section{K-theory and isomorphisms of compact AF Hopf C*-algebras}

Let $A$ be a compact AF Hopf C*-algebra. 
Recalling that the K-theory of a C*-algebraic  inductive limit is an algebraic direct limit of  K-theory groups, we see that if $P_1$ and $P_2$ are elements of $K(A)$ then they  belong to the $K$-theory $K(A_i)$ of some building block $A_i.$ Thus, the product  $P_1 \boxproduct P_2$ of these elements is in $K(A_i),$ and hence in $K(A).$ 

We note that a compact Hopf AF C*-algebra inherits an involutive antipode, and thus a tracial Haar state, from its finite-dimensional building blocks. 
 We say that an element of the dual algebra is compactly supported if it is zero outside all but finitely many matrix blocks of the dual. 

 \begin{proposition}Let $A$ and $B$ be compact Hopf AF C*-algebras.  Let $f\colon A\longrightarrow B $ be a $\fancyK$-co-multiplicative C*-isomorphism that intertwines co-units and Haar states.
  Then
the pullback map $f^*$ is a Jordan homomorphism \label{prop:box.implies.jordan.for.AF}\end{proposition}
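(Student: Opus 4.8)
The plan is to follow the proof of Proposition \ref{prop:box.implies.jordan} step by step, accommodating the fact that here the dual algebras $\Ah$ and $\Bh$ are \emph{discrete}, hence $c_0$-direct sums of matrix blocks, so that they are typically non-unital and their Haar weights are unbounded (faithful, lower semicontinuous) traces rather than bounded Haar states. A compact Hopf AF C*-algebra inherits an involutive antipode, and thus a tracial Haar state, from its finite-dimensional building blocks, so Proposition \ref{prop:box.and.convolve} and the argument of Lemma \ref{lem:cotraces} are available building block by building block. As in the proof of Proposition \ref{prop:box.implies.jordan}, $f^{*}$ is a linear map carrying self-adjoint elements to self-adjoint elements and intertwining the canonical Haar traces of the dual algebras (here one uses the hypothesis that $f$ intertwines Haar states, together with Lemma \ref{lem:pullbacks.and.Fourier}). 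The extra input needed is that the Fourier transform of a compact Hopf AF C*-algebra carries its $i$-th building block onto the finite sub-direct-sum of blocks of the dual that pair non-trivially with it --- the image being all of that sub-sum, by a dimension count --- so that $\F_A$ and $\F_B$ are compatible with the AF filtrations; combining this with the identity $f^{*}=\F_A\compose f\inv\compose\F_B\inv$ from Lemma \ref{lem:pullbacks.and.Fourier} and choosing, via the Elliott intertwining argument applied to the C*-isomorphism $f$, the filtrations of $A$ and $B$ compatibly, one may arrange that $f^{*}$ sends compactly supported elements of $\Bh$ to compactly supported elements of $\Ah$. On such elements the Haar weight $\tau$ takes finite values, which is what legitimises the spectral estimates below.

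With this in place, I would run the argument of Proposition \ref{prop:box.implies.jordan}. Applying the argument of Lemma \ref{lem:cotraces} (valid building block by building block) to the $\fancyK$-co-multiplicative isomorphism $f\inv$, and promoting bilinear to $n$-linear forms as in that proof, yields
$$g\bigl(f^{*}(b_1)f^{*}(b_2)\cdots f^{*}(b_n)\bigr)=g\bigl(f^{*}(b_1 b_2\cdots b_n)\bigr)$$
for all cotracial linear functionals $g$ and all compactly supported $b_i\in\Bh$. Taking $g$ to be the (tracial and cotracial, faithful) Haar weight $\tau$, and $b_1=\cdots=b_n=x$ for a compactly supported self-adjoint $x\in\Bh$, and using that $f^{*}$ intertwines Haar traces, gives $\tau(f^{*}(x)^{n})=\tau(x^{n})<\infty$ for every $n\geq1$. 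Since $x$ and $f^{*}(x)$ are both compactly supported self-adjoint elements, their spectral distributions relative to $\tau$, restricted to the nonzero part of the spectrum, are \emph{finite} positive measures on a common compact interval with equal $n$-th moments for all $n\geq1$, and a Weierstrass approximation argument forces these measures to coincide; as $\tau$ is faithful, their common support is ${\rm sp}(x)\setminus\{0\}={\rm sp}(f^{*}(x))\setminus\{0\}$, and since $0$ lies in both spectra (the algebras being non-unital) we conclude that $f^{*}$ preserves the spectra of compactly supported self-adjoint elements. In particular $f^{*}$ carries compactly supported positive elements to positive elements; as these are norm-dense in the positive cone and $f^{*}$ is continuous, $f^{*}$ is positive.

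Next, if $b_1$ and $b_2$ are compactly supported orthogonal positive elements, the case $n=2$ of the displayed identity gives $\tau(f^{*}(b_1)f^{*}(b_2))=\tau(f^{*}(b_1 b_2))=0$, so $f^{*}(b_1)$ and $f^{*}(b_2)$ are positive and tracially orthogonal, whence $f^{*}(b_1)^{1/2}f^{*}(b_2)^{1/2}=0$ by faithfulness of $\tau$; a further density step shows $f^{*}$ carries orthogonal positive elements to orthogonal positive elements. To finish, I would pass to the unitizations and extend $f^{*}$ to $\widetilde{f^{*}}\colon\widetilde{\Bh}\arrow\widetilde{\Ah}$ by sending the adjoined unit to the adjoined unit: spectrum-preservation on the self-adjoint part of $\Bh$ forces spectrum-preservation in $\widetilde{\Bh}$, so $\widetilde{f^{*}}$ is positive, and a routine case analysis shows it still carries orthogonal positive elements to orthogonal positive elements. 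Since $\widetilde{\Bh}$ is unital, Proposition \ref{prop:Wolff.positive.case} applies; and because $\widetilde{f^{*}}$ is itself unital, the factorisation it provides reads $\widetilde{f^{*}}(a)=\widetilde{f^{*}}(1)\pi(a)=\pi(a)$, so $\widetilde{f^{*}}$ --- and hence its restriction $f^{*}$ --- is a Jordan homomorphism.

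The step I expect to be the main obstacle is the spectral argument of the second paragraph: with an unbounded Haar weight one cannot take norm-limits of polynomials under $\tau$, so everything hinges on $f^{*}$ preserving compact support, which is precisely the compatibility of the Fourier transform with the AF filtrations recorded in the first paragraph; checking that carefully is where the real work lies. By comparison, the non-unitality of the dual algebras is a minor nuisance, dispatched by the unitization at the end.
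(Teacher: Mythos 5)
Your overall strategy (adapt Proposition \ref{prop:box.implies.jordan} to the non-unital discrete duals) is reasonable, and your first half --- moment equality on compactly supported elements, Newton--Girard/Weierstrass to get isospectrality, then density to get spectrum preservation on all of $\Bh$ --- runs parallel to the paper's argument. But the step you yourself flag as the crux is a genuine gap. You need $f^{*}$ to carry compactly supported elements of $\Bh$ to compactly supported elements of $\Ah$, and your proposed justification does not deliver this for the \emph{given} $f$. The Fourier-transform part is fine: since each building block $B_i$ is a finite-dimensional sub-Hopf-algebra, it is a direct sum of full matrix subcoalgebras and $\F_B(B_i)$ is exactly the corresponding finite sub-direct-sum of $\Bh$. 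The problem is $f\inv$: writing $f^{*}=\F_A\compose f\inv\compose\F_B\inv$, you need $f\inv(B_i)$ to land inside the coefficient Hopf $*$-algebra $\bigcup A_n$ of $A$, and a C*-isomorphism of AF algebras only maps $\bigcup B_n$ into $\bigcup A_n$ \emph{approximately}. The Elliott intertwining argument produces a \emph{new} filtration-compatible isomorphism agreeing with $f$ on $K$-theory; it does not show the original $f$ has this property, so invoking it proves the Proposition for a modified map rather than for the $f$ in the statement. (This suffices for the intended application in Theorem \ref{th:iso.for.compact.AF}, where $f$ is itself constructed by Elliott's theorem, but not for the Proposition as stated.) Without compact-support preservation, your spectral measures need not be finite and the moment argument stalls.

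Separately, your endgame diverges from the paper's in a way worth noting. The paper stops as soon as it has spectrum preservation on $\Ah$ (obtained by approximating general elements by compactly supported ones) and then quotes the Aupetit--du Mouton theorem \cite[Theorem 3.7]{aupetit3}: a bijective spectrum-preserving linear map between $c_0$-direct sums of matrix algebras is automatically Jordan. This bypasses entirely the positivity and orthogonality-preservation arguments, the passage to unitizations, and Proposition \ref{prop:Wolff.positive.case}. Your route through Wolff's theorem can probably be made to work (the ``routine case analysis'' for orthogonal positive elements of $\widetilde{\Bh}$ does go through, e.g.\ by separating the two elements with a spectral projection and using that $f^{*}$ sends projections to projections), but it is considerably longer and adds failure points that the citation of Aupetit--du Mouton avoids.
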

  \begin{proof} Let $x$ be a compactly supported element of $\Ah.$ There then exists a building block $D\subseteq A$ such that $x\in\widehat{D},$ where
$\widehat{D}$ is a finite-dimensional annihilator of an ideal, thus an ideal, in $\Ah.$ 
	
The proof of  Lemma \ref{lem:cotraces} remains valid up to equation \eqref{eq:g}, so we see that the map $f^*$ induced by $f$ on the dual algebras satisfies 
$$g(f^*\inv(y_1 y_2))= g(f^*\inv(y_1) f^*\inv (y_2))$$
where the map $f^*\inv\colon\Ah\arrow\Bh$ is the inverse of the pullback of the given map, the elements $y_i$ are compactly supported elements in $\widehat{D}\subseteq\Ah$,  and $g$ is a  cotracial linear functional on $\Bh.$ The same holds for products of finitely many $y_i,$ so that
$$\tau_{\Bh} ({f\inv}^{*}(x)^{n})=\tau_{\Bh} ({f\inv}^{*}(x^n))=\tau_{\Ah} (x^n),$$
and $\tau_{\Ah}$ (resp. $\tau_{\Bh}$) is the  Haar state of the discrete Hopf C*-algebra $\Ah$ (resp. $\Bh$.) As in the proof of Theorem \ref{th:banacheweski.fd.Hopf2}, we conclude that ${f\inv}^*(x)$ and $x$ are isospectral. 

Approximating a general element of the $c_0$-direct sum of matrix algebras $\Ah$ by compactly supported elements, we see that the map $f^*\inv\colon\Ah\arrow\Bh$ preserves the spectrum of operators. 
A bijective spectrum-preserving map of  C*-algebras that are $c_0$-direct sums of matrix algebras is necessarily a Jordan map, as follows from \cite[Theorem 3.7]{aupetit3}.
	\end{proof}
\begin{theorem} Let $A$ and $B$ be compact Hopf AF C*-algebras. Let $f_{*}\colon K(A)\longrightarrow K(B)$ be
an isomorphism of ordered K-theory groups that intertwines the K-theory states induced by the co-units and Haar states,  and the  products $\boxproduct_A$ and $\boxproduct_B$. Then $A$ and $B$ are isomorphic or co-anti-isomorphic as Hopf algebras.\label{th:banacheweski.Hopf.for.AF}\label{th:iso.for.compact.AF}
\end{theorem}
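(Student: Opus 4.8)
The plan is to mimic the structure of the proof of Theorem~\ref{th:iso.for.discrete.AF}, but now pulling the isomorphism from the classification theory at the level of the algebras $A$ and $B$ themselves rather than from the real $K$-theory of a symmetric subalgebra, and feeding it through Proposition~\ref{prop:box.implies.jordan.for.AF} and Corollary~\ref{prop:Hopf.Jordan.maps.preserve.antipodes.AF}. First I would invoke Elliott's classification of AF C*-algebras: since $f_*\colon K(A)\arrow K(B)$ is an isomorphism of ordered $K$-theory groups, it is induced by a C*-algebraic $*$-isomorphism $f\colon A\arrow B$, unique up to approximate unitary equivalence. By hypothesis the map induced by $f$ on $K$-theory is $\fancyK$-co-multiplicative (it intertwines $\boxproduct_A$ and $\boxproduct_B$), it intertwines the co-unit states, and it intertwines the Haar states — the last because, as noted in the excerpt, a compact Hopf AF C*-algebra inherits a tracial Haar state from its finite-dimensional building blocks, and this state is determined by $K$-theoretic data (each matrix block represented with multiplicity its matrix dimension), so an isomorphism respecting ordered $K$-theory respects the Haar state.

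Next I would apply Proposition~\ref{prop:box.implies.jordan.for.AF} to conclude that the pullback $f^*\colon\Bh\arrow\Ah$ is a Jordan homomorphism at the level of C*-algebras. Then Corollary~\ref{prop:Hopf.Jordan.maps.preserve.antipodes.AF}, applied to $f$ (which is a C*-algebra isomorphism intertwining co-units, with Jordan dual), shows that $f$ intertwines the antipodes $\antipode_A$ and $\antipode_B$. Since $f$ is a $*$-isomorphism and intertwines antipodes, the dual map $f^*$ intertwines the C*-involutions on $\Ah$ and $\Bh$, so $f^*$ is in fact a Jordan $*$-isomorphism, not merely a Jordan isomorphism. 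Now Lemma~\ref{lem:jordan.auto} (in its ``compact'' form, which is asserted in the excerpt) applies: a Jordan $*$-isomorphism between the duals is either multiplicative or anti-multiplicative. In the multiplicative case $f$ is a bi-algebra isomorphism; in the anti-multiplicative case $f$ is a co-anti-isomorphism of bi-algebras. Either way, uniqueness of the Hopf antipode promotes this to a Hopf algebra (co-anti-)isomorphism, which is the conclusion.

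The main obstacle I anticipate is the very first step — producing the C*-isomorphism $f$ from $f_*$ and, more delicately, arguing that the hypotheses on $f_*$ (co-unit state, $\boxproduct$-compatibility, Haar state) genuinely transfer to the chosen lift $f$, since Elliott's classification only pins down $f$ up to approximate unitary equivalence, and the product $\boxproduct$ lives on $K(A\tensor\compact)$ rather than being visibly a pure $K$-theory invariant. The resolution is that all of $\boxproduct$-compatibility, co-unit-intertwining, and Haar-state-intertwining are defined at the level of $K$-theory (or of traces, which are determined by ordered $K$-theory for AF algebras), so they depend only on the class $f_*$, not on the particular lift; this is exactly parallel to how the analogous point is handled in the proof of Theorem~\ref{th:iso.for.discrete.AF}. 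A secondary technical point is checking that the ``compact'' version of Lemma~\ref{lem:jordan.auto} and the appeal to \cite{CES}-type pre-dual isometry results really do go through for these (possibly non-unital, $c_0$-direct-sum) dual algebras, but the excerpt already commits to that version of the lemma, so I would simply cite it.
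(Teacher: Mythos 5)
Your proposal follows essentially the same route as the paper's own proof: lift $f_{*}$ to a C*-isomorphism via Elliott's classification, observe that it intertwines Haar states and co-units, apply Proposition~\ref{prop:box.implies.jordan.for.AF} to get that the pullback is Jordan, upgrade it to a Jordan $*$-isomorphism via the antipode-intertwining result, and conclude with Lemma~\ref{lem:jordan.auto} and uniqueness of the antipode. The only cosmetic difference is that the paper cites Proposition~\ref{prop:Hopf.Jordan.maps.preserve.antipodes} directly (whose hypotheses---compact Hopf C*-algebras with tracial Haar states---are satisfied here), whereas you cite Corollary~\ref{prop:Hopf.Jordan.maps.preserve.antipodes.AF}, which is phrased for the discrete duals; either reference yields the same conclusion.
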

 \begin{proof} Using the classification theory for AF algebras\cite{elliott}, we lift the given map to a C*-isomorphism $f\colon A\arrow B.$ The lifted map necessarily intertwines Haar states and co-units when restricted to projections, from which it follows that it intertwines Haar states and co-units in general. By Proposition \ref{prop:box.implies.jordan.for.AF} we then have that the pullback $f^*\colon\Bh\longrightarrow\Ah$ of $f$ is  a Jordan homomorphism at the level of C*-algebras. By Proposition \ref{prop:Hopf.Jordan.maps.preserve.antipodes}, the pullback map is a Jordan *-homomorphism. By Lemma \ref{lem:jordan.auto} the pullback map $f^*$ is then either multiplicative or anti-multiplicative. We thus have, by duality, that $f$ is either an isomorphism or a co-anti-isomorphism of bi-algebras.
 It follows from uniqueness of the Hopf algebra antipode(s) that $f$ is a Hopf algebra (co-anti)isomorphism. 
 \end{proof}
 

\centerline{{\huge {\rotatebox[]{270}{\textdagger}}\!{\rotatebox[]{90}{\textdagger}}}}\normalsize

\end{document}